\newtheorem{thm}{Theorem}
\newtheorem{lem}[thm]{Lemma}
\newtheorem{defn}[thm]{Definition}
\newtheorem{prop}[thm]{Proposition}
\newcommand{\erf}{{\rm erf}}
\newcommand{\sat}{{\rm sat}}
\newcommand{\sm}{{\rm small}}
\newcommand{\lr}{{\rm large}}
\newcommand{\F}{{\mathcal F}}
\newcommand{\A}{{\mathcal A}}
\title{Saturation of $k$-chains in the Boolean lattice}
\author{Ryan R. Martin}
\address{Department of Mathematics, Iowa State University, Ames, Iowa, USA}
\author{Nick Veldt}
\email{rymartin@iastate.edu, mnveldt@iastate.edu}
\keywords{extremal set theory, Sperner property, poset saturation}
\subjclass{06A07,05D05}
\begin{document}

\maketitle

\begin{abstract}
Given a set $X$, a collection $\mathcal{F} \subset \mathcal{P}(X)$ is said to be $k$-Sperner if it does not contain a chain of length $k+1$ under set inclusion and it is $saturated$ if it is maximal with respect to this property. Gerbner et al. conjectured that, if $|X|$ is sufficiently large compared to $k$, then the minimum size of a saturated $k$-Sperner system is $2^{k-1}$. Noel, Morrison, and Scott disproved this conjecture later by proving that there exists $\varepsilon$ such that for every $k$ and $|X| > n_0(k)$, there exists a saturated $k$-Sperner system of cardinality at most $2^{(1-\varepsilon)k}$.

In particular, Noel, Morrison, and Scott proved this for $\varepsilon = 1-\frac{1}{4}\log_2 15 \approx 0.023277$. We find an improvement to $\varepsilon = 1-\frac{1}{5}\log_2 28 \approx 0.038529$. We also prove that, for $k$ sufficiently large, the minimum size of a saturated $k$-Sperner family is at least $\sqrt{k}\, 2^{k/2}$, improving on the previous Gerbner, et al. bound of $2^{k/2-0.5}$
\end{abstract}

\section{Introduction}

Given a set $X$, we define a \emph{Sperner system} as a collection $\mathcal{F} \subseteq \mathcal{P}(X)$ such that there does not exist $A, B \in \mathcal{F}$ such that $A \subsetneq B$. A $k$-\emph{chain} (or a \emph{chain}, where $k$ is understood) is a subcollection $\{A_1, \ldots, A_k\}$ such that $A_1 \subsetneq \cdots \subsetneq A_k$.

More generally, a $k$-\emph{Sperner system} is a collection $\mathcal{F} \subseteq \mathcal{P}(X)$ with no $(k+1)$-chain. The size of the largest Sperner system (i.e., $1$-Sperner system or antichain) was determined by Sperner~\cite{Spe} and the size of the largest $k$-Sperner system was determined by Erd\H{o}s~\cite{Erd}. Extremal poset theory is an increasingly well-researched subject in which one asks for the largest family in an $n$-dimensional Boolean lattice that has no copy of some given poset $P$. For a survey of some relatively recent results on this subject, see~\cite{GriggsLi} and~\cite[Chapter 7]{GPbook}. 

The above consists of so-called Tur\'an~\cite{Turan,ErdSto,ErdSim} problems in the Boolean lattice and so it is natural to consider saturation problems as well, in the style of Erd\H{o}s, Hajnal, and Moon~\cite{EHM}. 

We say that a $k$-Sperner system is \emph{saturated} if, for every set $S \in \mathcal{P}(X) \setminus \mathcal{F}$, the collection $\mathcal{F} \cup \{S\}$ contains a $(k+1)$-chain. 

For a set $X$ of cardinality $n$, we are interested in finding the size of the smallest saturated $k$-Sperner system. This problem was first studied by Gerbner, Keszegh, Lemons, Palmer, P\'alv\"olgyi, and Patk\'os~\cite{GERB}, and later improved by Morrison, Noel, and Scott~\cite{MNS}. Saturation was studied for other posets, in particular a version known as \emph{induced saturation}~\cite{FKKMRSS,KLMPP,MSW,FPSST,BGJJ,BGIJ,IvanButterfly,IvanDiamond,Liu}. It should be noted that, in the case of chains, induced saturation is the same as ordinary (also called \emph{weak}) saturation. 

Given integers $n$ and $k$, let $\sat(n,k)$ denote the minimum size of a saturated $k$-Sperner system in $\mathcal{P}(X)$ where $|X| = n$. It was shown by Gerbner, et al.~\cite{GERB}, that for sufficiently large $n$, the saturation number will not change as the cardinality increases. So, we define 
$$
\sat(k) := \lim_{n\rightarrow \infty} \sat(n,k) .
$$

For the rest of this paper, we study $\sat(k)$.

\begin{thm}[Gerbner et al.~\cite{GERB}]
For $k\in\{1,2,3\}$, $\sat(k) = 2^{k-1}$, and $\sat(k) \leq 2^{k-1}$ for all $k\geq 4$.
\end{thm}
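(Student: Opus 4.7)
The theorem splits into the upper bound $\sat(k)\le 2^{k-1}$ for all $k$ and the matching lower bound when $k\in\{1,2,3\}$.

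\emph{Upper bound.} Take $\F_1=\{\emptyset\}$, and for $k\ge 2$ fix $k-1$ distinguished elements $y_1,\ldots,y_{k-1}\in X$, set $R:=X\setminus\{y_1,\ldots,y_{k-1}\}$, and define
$$\F_k=\big\{B:B\subseteq\{y_2,\ldots,y_{k-1}\}\big\}\cup\big\{B\cup\{y_1\}\cup R:B\subseteq\{y_2,\ldots,y_{k-1}\}\big\},$$
a family of $2^{k-1}$ sets. Projecting each $A\in\F_k$ to $A\cap\{y_2,\ldots,y_{k-1}\}$ sends any chain in $\F_k$ to a chain in the $(k-2)$-cube $\mathcal{P}(\{y_2,\ldots,y_{k-1}\})$, losing at most one inclusion (at the switch from ``lower'' sets to ``upper'' sets); since that cube has chain rank $k-1$, $\F_k$ is $k$-Sperner. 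For saturation, observe that $S\notin\F_k$ is equivalent to $\emptyset\subsetneq S\cap(\{y_1\}\cup R)\subsetneq\{y_1\}\cup R$. Writing $S_0=S\cap\{y_2,\ldots,y_{k-1}\}$, I plan to build the chain
$$\emptyset\subsetneq\{y_{i_1}\}\subsetneq\cdots\subsetneq S_0\subsetneq S\subsetneq S_0\cup\{y_1\}\cup R\subsetneq\cdots\subsetneq X,$$
adding the elements of $S_0$ one at a time below $S$ and the elements of $\{y_2,\ldots,y_{k-1}\}\setminus S_0$ one at a time above $S_0\cup\{y_1\}\cup R$. A direct count produces $k+1$ sets, with the boundary case $S_0=\{y_2,\ldots,y_{k-1}\}$ absorbed since then $S_0\cup\{y_1\}\cup R=X$.

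\emph{Lower bound for $k\le 3$.} The cases $k=1,2$ are immediate because any family of size less than $2^{k-1}$ admits at most a $k$-chain, so one added set cannot force a $(k+1)$-chain. For $k=3$, suppose for contradiction that $|\F|\le 3$. A forced $4$-chain after adding $S\notin\F$ must use all of $\F\cup\{S\}$, so $\F=\{A_1,A_2,A_3\}$ is itself a $3$-chain $A_1\subsetneq A_2\subsetneq A_3$, and saturation requires every $S\notin\F$ to be comparable with each $A_i$. Choosing $S=\{z\}$ for any $z\notin A_3$ forces $A_1=\emptyset$, and dually $A_3=X$. Then, given nontrivial $A_2$, any singleton $\{y\}$ with $y\notin A_2$ is incomparable with $A_2$, contradicting saturation. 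Hence $\sat(3)\ge 4$.

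The main subtlety is the chain-length count in the upper bound's saturation argument, where the edge case $S_0=\{y_2,\ldots,y_{k-1}\}$ makes the two sets $S_0\cup\{y_1\}\cup R$ and $X$ coincide; one must verify that the shortened chain nevertheless contains exactly $k+1$ sets. The lower bound argument is then a short finite case check using singletons.
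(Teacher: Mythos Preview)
Your upper-bound construction is exactly the one the paper gives: all subsets of a fixed $(k-2)$-set together with their complements in $X$ (your ``lower'' sets are the subsets of $\{y_2,\ldots,y_{k-1}\}$ and your ``upper'' sets are the supersets of its complement $\{y_1\}\cup R$). The paper merely states this family and attributes the verification to Gerbner et al., so your projection argument for the $k$-Sperner property and your explicit $(k+1)$-chain for saturation are welcome additions, and they are correct, including the boundary case you flag.

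The paper does not prove the lower bounds for $k\le 3$ at all, so your argument there is extra content. The $k=1,2$ cardinality argument is fine. For $k=3$ there is a small slip in the sentence ``Choosing $S=\{z\}$ for any $z\notin A_3$ forces $A_1=\emptyset$, and dually $A_3=X$.'' Comparing $\{z\}$ with $A_1$ only yields the conditional $A_3\ne X\Rightarrow A_1=\emptyset$, and the dual statement is its contrapositive, not an independent conclusion; as written you have only established $A_1=\emptyset$ \emph{or} $A_3=X$. The clean fix is to compare $\{z\}$ with $A_3$ instead: since $|A_3|\ge 2$ (from $A_1\subsetneq A_2\subsetneq A_3$) and $z\notin A_3$, the sets $\{z\}$ and $A_3$ are incomparable, contradicting saturation, so no such $z$ exists and $A_3=X$; dually $A_1=\emptyset$. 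With that one-line correction your $k=3$ argument is complete.
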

The upper bound is accomplished via construction: Fix $k$, and consider the power set of $[k-2]$, which we denote $2^{[k-2]}$. Let $\F$ consist of the sets $A$ and $[n]-A$ for every $A\in 2^{[k-2]}$. The family $\F$, of $2^{k-1}$ members of $2^{[n]}$, is a saturated $k$-Sperner system, giving the upper bound. 

For example, for $k=4$, $\F=\left\{\emptyset, \{1\}, \{2\}, \{1,2\}, \linebreak\overline{\{1,2\}}, \overline{\{1\}}, \overline{\{2\}}, \overline{\emptyset}\right\}$, which is a saturated 4-Sperner system. 

\begin{thm}[Morrison, Noel, Scott~\cite{MNS}]
If $k \geq 6$, then $\sat(k) \leq 2^{(1-\varepsilon)k}$, where $\varepsilon = 1-\frac{1}{4}\log_2 15 \approx 0.023277$. If $k\leq 5$, then $\sat(k) = 2^{k-1}$.
\end{thm}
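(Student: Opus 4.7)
The plan is to split the proof into two parts: the upper bound construction for $k \geq 6$, and the matching lower bound for $k \leq 5$. Note that the target bound $2^{(1-\varepsilon)k}$ simplifies to $15^{k/4}$, strongly hinting at a construction that uses $15$ sets for each ``block of four chain levels.''

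For the upper bound, I would aim for a construction of the following shape. First, identify a small base gadget: a family $\F_0$ of exactly $15$ sets on a small ground set $Y$ that serves as a saturated $4$-Sperner system on $2^Y$, but with additional compatibility properties, such as containing both $\emptyset$ and $Y$ and admitting distinguished chain extensions at its ``top'' and ``bottom.'' This gadget can be sought by finite case analysis, possibly computer-assisted. The second ingredient is an amplification lemma: given $t$ disjoint copies $Y_1,\ldots,Y_t$ of the ground set, one builds a family on $Y_1 \sqcup \cdots \sqcup Y_t$ by composing the $15$ choices on each block, engineered so that any chain in the combined family climbs at most $4$ levels per block and hence has total length at most $4t$. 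To handle $k$ not divisible by $4$, one pads the last block with a small residual construction, which absorbs only a constant factor into the exponent. Altogether this should produce a saturated $k$-Sperner family of size at most $15^{k/4}$, matching $2^{(1-\varepsilon)k}$.

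The main technical obstacle is verifying saturation of the amplified family. The Sperner property (no long chain) is preserved by block constructions in a fairly direct way, but saturation requires that every set $S$ outside the family, when added, completes a chain of length $4t+1$. A given $S$ may have restrictions lying strictly between the $\F_0$-sets in many blocks simultaneously, so showing that at least one block's saturation can be leveraged to produce a maximal chain is nontrivial. The base gadget $\F_0$ therefore must be designed with enough explicit chain structure linking its distinguished sets that the amplification argument extends uniformly across all ``deficient'' blocks.

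For the lower bound at $k \leq 5$, the claim $\sat(k) \geq 2^{k-1}$ should be proved by a case analysis on maximal chains within the family. Saturation of $\F$ on a sufficiently large ground set guarantees a full chain $\emptyset = A_0 \subsetneq A_1 \subsetneq \cdots \subsetneq A_k = [n]$ inside $\F$; the differences $A_{i+1} \setminus A_i$ partition the ground set, and saturation forces many ``alternative'' sets at each intermediate level, because each omitted subset must be blocked from extending the chain by length one. A careful bookkeeping of these forced sets gives the $2^{k-1}$ bound for $k \leq 5$, while the counting loses its tightness at $k \geq 6$, precisely the regime where the construction above takes over.
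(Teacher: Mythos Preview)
Your high-level plan (a small base gadget plus an amplification/product lemma) is the right philosophy, but the concrete realization you sketch does not match what actually makes the argument work, and as stated it would not go through.

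First, the base gadget is \emph{not} a saturated $4$-Sperner system of $15$ sets. The actual seed is a saturated $6$-Sperner system $\F$ of $30$ sets, constructed so that it has a homogeneous atom $H$ and splits evenly as $|\F^{\sm}|=|\F^{\lr}|=15$. The number $15$ is not the size of the family but the size of each half. There is no natural ``$15$-set $4$-Sperner gadget'' whose direct product over $t$ blocks yields a $4t$-Sperner system; under any product of the type you describe, chain lengths add roughly as $k_1+k_2-1$, not $k_1+k_2$, so iterating a $4$-Sperner seed would not produce a $4t$-Sperner family of size $15^t$.

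Second, the amplification is not a naive block product. The key lemma (the paper's Lemma~\ref{lem:bootstrap}) combines two saturated systems $\F_1,\F_2$ with homogeneous atoms $H_1,H_2$ by pairing small with small and large with large:
\[
\mathcal{G}=\{A\cup B:A\in\F_1^{\sm},\,B\in\F_2^{\sm}\}\cup\{S\cup T:S\in\F_1^{\lr},\,T\in\F_2^{\lr}\}.
\]
This $\mathcal{G}$ is a saturated $(k_1+k_2-2)$-Sperner system with homogeneous atom $H_1\cup H_2$ and $|\mathcal{G}|=|\F_1^{\sm}||\F_2^{\sm}|+|\F_1^{\lr}||\F_2^{\lr}|$. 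It is exactly this small/large pairing, forced by the homogeneous atom, that makes saturation survive the composition; your sketch omits this structure, and your concern that ``verifying saturation of the amplified family'' is the crux is correct---it fails for a plain product and needs precisely this mechanism. Iterating the lemma on the $k=6$ seed gives a $(4j+2)$-Sperner system with $2\cdot 15^j$ members, and padding with the trivial $3$-Sperner family $\{\emptyset,\{1\},\overline{\{1\}},\overline{\emptyset}\}$ handles the residues mod $4$; solving $2^{s+1}\cdot 15^j\le 2^{(1-\varepsilon)k}$ yields $\varepsilon=1-\tfrac14\log_2 15$.

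For the $k\le 5$ lower bound your outline is reasonable in spirit, and indeed that part is done by finite case analysis in~\cite{MNS}; the present paper simply cites it.
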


This improvement on the upper bound is due to a more efficient saturated set for $k=6$ which appeared in~\cite{GERB}, and they showed that it that can be extended for larger values of $k$. 

In this paper, we provide a tightening of the bounds as follows:

\begin{thm}
If $k \geq 7$, then $\sat(k) \leq 2^{(1-\varepsilon)k}$, where $\varepsilon = 1-\frac{1}{5}\log_2 28 \approx 0.038529$.
\label{thm:UB}
\end{thm}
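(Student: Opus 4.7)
The plan is to sharpen the construction of Morrison, Noel, and Scott~\cite{MNS} by identifying a more efficient saturated Sperner ``module'' and iterating it. Their argument uses the size-$15$ saturated $6$-Sperner system of~\cite{GERB}, combined recursively, to produce saturated $k$-Sperner systems of size essentially $15^{k/4}$, which gives the exponent $1-(1/4)\log_2 15$. Our improvement to the exponent $1-(1/5)\log_2 28$ reflects a module that contributes a ratio of $28^{1/5}$ per added level of chain length, namely a saturated $7$-Sperner system of size $28$.

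The first and main step is to exhibit an explicit saturated $7$-Sperner system $\mathcal{F}_0$ on a sufficiently large ground set $[n_0]$ with $|\mathcal{F}_0| = 28$. We anticipate $\mathcal{F}_0$ to extend the size-$15$ saturated $6$-Sperner system of~\cite{GERB} by one extra level, and to be constructed in the same style: specify subsets of $[n_0]$ arranged by level, then verify that no eight of them form a chain and that every subset of $[n_0]$ not in $\mathcal{F}_0$ extends some chain of $\mathcal{F}_0$ to length $8$. The ground-set size $n_0$ is chosen large enough for $\sat(n,7)$ to have stabilized; this is justified by the stabilization result proved in~\cite{GERB}.

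The second step adapts the recursive construction of~\cite{MNS} to glue many disjoint copies of $\mathcal{F}_0$ together. The essential building-block statement is: given a saturated $k$-Sperner system $\mathcal{F}$ on a ground set $Y$ disjoint from $X_0$, form a saturated $(k+5)$-Sperner family on $Y \cup X_0$ of size at most $28\cdot|\mathcal{F}|$ via a gluing that identifies the bottom and top levels of the module with existing levels of $\mathcal{F}$. Saturation is verified by casework on the intersection of a prospective added set with $Y$ and with $X_0$. Iterating $q$ times starting from a trivially saturated small base, and managing residues $k\bmod 5$ via auxiliary base cases, yields a saturated $k$-Sperner family of size at most $C\cdot 28^{k/5} = C\cdot 2^{(1-\varepsilon)k}$ for an absolute constant $C$, which absorbs into the bound for $k\geq 7$. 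The principal obstacle is the module construction; checking saturation is a finite but intricate combinatorial task. The iteration itself is then bookkeeping modeled closely on~\cite{MNS}.
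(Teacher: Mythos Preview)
Your high-level strategy matches the paper's: construct an efficient saturated $7$-Sperner module and then iterate via the composition lemma of Morrison, Noel, and Scott (Lemma~\ref{lem:bootstrap} here). However, there are two genuine gaps.

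\textbf{First, the module sizes are wrong.} The saturated $6$-Sperner system used in~\cite{MNS} has size $30$, not $15$; and the paper's saturated $7$-Sperner system has size $56$, not $28$. The numbers $15$ and $28$ are not the sizes of the modules but rather $\bigl|\mathcal{F}^{\sm}\bigr|=\bigl|\mathcal{F}^{\lr}\bigr|$, i.e., half the size. It is this quantity that the composition lemma multiplies: composing a balanced $k_1$-system with a balanced $k_2$-system yields a $(k_1+k_2-2)$-system with $\bigl|\mathcal{G}^{\sm}\bigr|=\bigl|\mathcal{F}_1^{\sm}\bigr|\cdot\bigl|\mathcal{F}_2^{\sm}\bigr|$. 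Your claim ``a saturated $(k+5)$-Sperner family of size at most $28\cdot|\mathcal{F}|$'' is in fact correct arithmetically, but only because the module has $\bigl|\mathcal{F}_0^{\sm}\bigr|=28$ and hence total size $56$; your stated premise that $|\mathcal{F}_0|=28$ would not deliver it. This confusion suggests the mechanism of Lemma~\ref{lem:bootstrap} has been misread.

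\textbf{Second, the actual content of the proof is missing.} The entire difficulty of Theorem~\ref{thm:UB} is producing the explicit $56$-element family, and your proposal only ``anticipates'' one and calls its verification ``a finite but intricate combinatorial task.'' The paper gives the construction in full: the middle antichain $\A_3$ is the Fano plane together with the complements of its lines (saturation follows from $\chi(\mathrm{Fano})=3$), $\A_2^{\sm}$ is the $7$-cycle on $\{1,\dots,7\}$ and $\A_2^{\lr}$ the seven independent triples, and the outer layers are forced by Lemma~\ref{lem:A1}. Layering and saturation of each $\A_i$ are then checked directly. Without an explicit family (or at least its key structural ingredients) and the verification that it is saturated and layered, the proposal is not a proof but a restatement of the target.
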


Similar to~\cite{MNS}, we obtain a more efficient saturated set for $k=7$, which by the same method can be extended for larger values of $k$.

As to the lower bounds for $\sat(k)$, these also first appeared in~\cite{GERB}.

\begin{thm}[Gerbner et al.~\cite{GERB}]
    For $k\geq 1$, $\sat(k)\geq 2^{k/2-0.5}$.
\end{thm}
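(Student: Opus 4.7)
The plan is to show $|\F|\geq 2^{(k-1)/2}$ for a saturated $k$-Sperner family $\F$ by producing roughly $2^{k-1}$ ``test sets'' whose behavior under saturation forces an inequality of the form $|\F|(|\F|+1)\geq 2^{k-1}$.

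First I would observe that $\F$ must contain a chain of length exactly $k$: if not, $\F$ is itself $(k-1)$-Sperner, yet saturation applied to any $S\notin\F$ yields a $(k+1)$-chain in $\F\cup\{S\}$ whose $k$ non-$S$ elements form a $k$-chain inside $\F$, contradicting $(k-1)$-Spernerness. Fix such a maximum chain $B_1\subsetneq B_2\subsetneq\cdots\subsetneq B_k$ in $\F$ and, for each $i\in\{1,\ldots,k-1\}$, choose $y_i\in B_{i+1}\setminus B_i$; the $y_i$ are automatically pairwise distinct since $y_i\in B_{i+1}\subseteq B_j$ while $y_j\notin B_j$ whenever $j>i$. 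For each $T\subseteq\{1,\ldots,k-1\}$ put
\[
S_T \;:=\; B_1\cup\{y_i:i\in T\},
\]
so the $S_T$ form $2^{k-1}$ pairwise distinct sets, all sitting in $[B_1,B_k]$.

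For each $T$, either $S_T\in\F$---which can happen for at most $|\F|$ values of $T$---or, by saturation, there exist $A_T,B_T^*\in\F$ with $A_T\subsetneq S_T\subsetneq B_T^*$ and $\sml(A_T)+\lrg(B_T^*)\geq k$, where $\sml$ and $\lrg$ denote the lengths of longest chains in $\F$ ending/starting at a given element and $A_T,B_T^*$ are taken as the top/bottom of longest chains in $\F$ strictly below/above $S_T$. If one can show that $T\mapsto(A_T,B_T^*)$ is injective on $\{T:S_T\notin\F\}$, then $|\F|^2\geq 2^{k-1}-|\F|$, hence $|\F|(|\F|+1)\geq 2^{k-1}$, which gives $|\F|\geq 2^{(k-1)/2}=2^{k/2-0.5}$.

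The main obstacle is the injectivity claim: a priori two distinct test sets $S_T\neq S_{T'}$ lying in the same interval $(A,B^*)$ could share witnesses, since knowing $A\subsetneq S\subsetneq B^*$ does not recover $S$. I would try to force injectivity by making the witness selection canonical---e.g.\ by fixing once and for all specific chains in $\F$ realizing each achievable $\sml$- and $\lrg$-value and reading off $A_T,B_T^*$ deterministically from them---and by exploiting the explicit product form $S_T=B_1\cup\{y_i:i\in T\}$ to reconstruct $T$ from the pair of canonical witnesses. If a clean injection eludes this strategy, a natural fallback is to augment the codomain with a secondary invariant such as $|A_T|$ or the least $i$ with $y_i\in B_T^*\setminus A_T$, yielding an injection into $\F\times\F\times\{1,\ldots,k\}$ and still producing the desired exponential lower bound up to at most a polynomial factor, which can then be absorbed into constants.
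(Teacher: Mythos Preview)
Your argument has a genuine gap at exactly the point you flag: the injectivity of $T\mapsto(A_T,B_T^*)$ is not established, and in fact cannot be in the form you suggest. A single pair $(A,B^*)\in\F\times\F$ with $A\subseteq B_1$ and $B_k\subseteq B^*$ would satisfy $A\subsetneq S_T\subsetneq B^*$ for \emph{every} one of your $2^{k-1}$ test sets simultaneously, so no canonical choice of witnesses can separate them. More generally, the number of $T$ with $A\subsetneq S_T\subsetneq B^*$ is governed by how many of the $y_j$ lie in $B^*\setminus A$, and nothing in your setup bounds this quantity. Your fallback of enlarging the codomain to $\F\times\F\times\{1,\ldots,k\}$ does not help either: it yields only $|\F|^2\cdot k\geq 2^{k-1}$, i.e.\ $|\F|\geq 2^{(k-1)/2}/\sqrt{k}$, which is strictly weaker than the claimed bound, and the lost $\sqrt{k}$ cannot be ``absorbed into constants.''

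The paper's argument sidesteps this entirely by counting over \emph{all} of $2^{[n]}$ rather than over $2^{k-1}$ hand-picked test sets. For any $S\in 2^{[n]}\setminus\F$, saturation gives a $k$-chain $F_1\subsetneq\cdots\subsetneq F_k$ in $\F$ and an index $i$ with $F_i\subsetneq S\subsetneq F_{i+1}$. The key point you are missing is a \emph{size} bound on the gap: since the $k-1$ differences $|F_{j+1}\setminus F_j|$ are each at least $1$ and sum to at most $n$, every individual gap satisfies $|F_{i+1}\setminus F_i|\leq n-k+2$. Hence each ordered pair $(F_i,F_{i+1})$ can account for at most $2^{n-k+2}$ sets $S$, and one obtains $\binom{|\F|}{2}\cdot 2^{n-k+2}\geq 2^n$, which rearranges to $|\F|\geq 2^{(k-1)/2}$. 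The moral is that restricting to a small family of test sets throws away the leverage coming from the gap bound; working globally over $2^{[n]}$ is both simpler and what makes the counting go through.
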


This lower bound comes from a counting argument. Let $\mathcal{F}$ be a saturated $k$-Sperner set and consider some element $S \in 2^{[n]}\setminus \mathcal{F}$. By definition, there exists a sequence of sets $F_1, \ldots, F_{k}$ such that $F_1 \subset \cdots \subset F_i \subset S \subset F_{i+1} \subset \cdots \subset F_k$ is a $k+1$-chain. Because of the existence of the $k$-chain in $\mathcal{F}$, the size difference $|F_{i+1} \setminus F_i|$ is at most $n-k+2$, meaning that each pair of elements in $\mathcal{F}$ can account for at most $2^{n-k+2}$ elements of $2^{[n]}$, including $F_i$ and $F_{i+1}$ themselves. Thus, 
$$
{|\F| \choose 2} 2^{n-k+2} \geq 2^n .
$$ 
Rearranging gives the lower bound.

In this paper, we use probabilistic methods to slightly improve this bound for large $k$. 
\begin{thm}
    If $k\geq 497$, then $\sat(k) > \sqrt{k}\, 2^{k/2}$.
    \label{thm:LB}
\end{thm}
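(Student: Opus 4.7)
The plan is to refine the double-counting argument of Gerbner et al.\ by observing that each $S\notin\F$ is contained in the interval $[F,F']$ of many pairs from $\F^2$, not just one. For each $S\in 2^{[n]}$, I would set $L(S)=\{F\in\F:F\subsetneq S\}$ and $U(S)=\{F\in\F:F\supsetneq S\}$, and denote by $\ell(S)$ and $u(S)$ the lengths of the longest chains contained in $L(S)$ and $U(S)$ respectively. The first step is the structural observation that for every \emph{interior} $S\notin\F$ (meaning $L(S),U(S)\neq\emptyset$), one has the exact equality $\ell(S)+u(S)=k$: saturation provides a $(k+1)$-chain through $S$, forcing $\ell+u\geq k$, while concatenating longest chains from $L(S)$ and $U(S)$ yields a chain in $\F$ of length $\ell+u$, which must be $\leq k$ by the $k$-Sperner property.

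Next I would carry out the double count. The number of ordered pairs $(F,F')\in\F^2$ with $F\subseteq S\subseteq F'$ is $|L(S)|\cdot|U(S)|$, so summing over $S$ and swapping the order of summation gives
\[
\sum_{S\in 2^{[n]}}|L(S)|\cdot|U(S)|=|\F|+\sum_{\substack{F,F'\in\F\\F\subsetneq F'}}2^{|F'\setminus F|}\leq|\F|+\binom{|\F|}{2}\cdot 2^{n-k+2},
\]
where the inequality uses the standard bound $|F'\setminus F|\leq n-k+2$ for any pair inside a $k$-Sperner family whose interval contains a non-member. For interior $S$ with $\ell(S),u(S)\geq 2$, I would use $|L(S)|\cdot|U(S)|\geq\ell(S)\,u(S)\geq 2(k-2)$ (the minimum of $ab$ under the constraints $a+b=k$ and $a,b\geq 2$).

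The probabilistic core is to show that the number of ``atypical'' $S\in 2^{[n]}$ with $\min(\ell(S),u(S))\leq 1$ is $o(2^n)$ as $n\to\infty$. For instance, $\ell(S)\leq 1$ forces $L(S)$ to lie inside the antichain of minimal elements of $\F$, so $S$ must contain no non-minimal element of $\F$; an LYM-type union bound, together with the fact that $\F$ contains $k$-chains and hence elements at $k$ distinct levels spread across $[0,n]$, should deliver the needed estimate. Combining the two sides of the count then produces $|\F|^2\geq(k-2)\cdot 2^k\,(1-o(1))$, and for $k\geq 497$ the gap between $\sqrt{k-2}$ and $\sqrt{k}$ (together with the $o(1)$ corrections) can be closed by a slight sharpening, for example by showing that the average value of $\ell(S)u(S)$ over typical $S$ is bounded strictly above $2(k-2)$.

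The hardest step will be this probabilistic estimate of the atypical count, because the final conclusion requires the sharpened constant $\sqrt{k}$ rather than the weaker $\sqrt{(k-1)/2}$ that would fall out of using only the cruder $\ell(S)u(S)\geq k-1$ valid for all interior $S$. The specific threshold $k\geq 497$ should emerge from balancing the error terms from this step against the main $2(k-2)$ contribution, so the boundary cases (and the $\emptyset$, $[n]$ situations which affect the validity of the $n-k+2$ gap bound) must be tracked quite carefully.
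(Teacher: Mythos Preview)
Your double count breaks at the upper bound on the right-hand side. The inequality $|F'\setminus F|\leq n-k+2$ is \emph{not} valid for an arbitrary comparable pair $F\subsetneq F'$ in $\F$; it holds only for the specific \emph{consecutive} pair $(F_i,F_{i+1})$ in a saturating $k$-chain through a given $S$. Indeed, every saturated $k$-Sperner system contains both $\emptyset$ and $[n]$, and this pair alone contributes $2^{n}-2$ to $\sum_{F\subsetneq F'}2^{|F'\setminus F|}$, which already exceeds your claimed bound $\binom{|\F|}{2}2^{\,n-k+2}$ whenever $|\F|<2^{k/2}$ --- precisely the regime you are trying to rule out. So the inequality
\[
\sum_{S}|L(S)|\cdot|U(S)|\;\leq\;|\F|+\binom{|\F|}{2}\,2^{\,n-k+2}
\]
is false as stated, and without it the argument yields nothing. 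There is no obvious repair: restricting to ``adjacent'' pairs on the right would force you to abandon the product $|L(S)|\cdot|U(S)|$ on the left, which is the whole source of the claimed $k$-factor improvement.

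Even bracketing this, two further steps are only gestured at. The ``LYM-type'' estimate that atypical $S$ (those with $\ell(S)\leq 1$ or $u(S)\leq 1$) have density $o(1)$ is not established, and the final sharpening from $\sqrt{k-2}$ to $\sqrt{k}$ is left to an unproved claim that the average of $\ell(S)u(S)$ strictly exceeds $2(k-2)$.

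For comparison, the paper's proof takes a completely different route: it uses the canonical decomposition $\F=\bigcup_i\A_i$ into saturated antichains and lower-bounds each $|\A_i|$ individually via a biased random set. Choosing $R\subseteq[n]$ with each element included independently with probability $\tfrac12-\varepsilon_i$ (where $\varepsilon_i$ depends on $i$ and $k$) and using that $\A_i$ must saturate $R$ gives $|\A_i|>\exp\bigl\{2\ln 2\cdot i(k-1-i)/(k-1)\bigr\}$. Summing these bounds over $i$ and estimating the resulting Gaussian-type sum by an integral in terms of the error function produces the $\sqrt{k}\,2^{k/2}$ bound, with the threshold $k\geq 497$ arising from the explicit error-function inequality rather than from any double-counting balance.
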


\section{Preliminaries}

Given a collection $\mathcal{F} \subseteq \mathcal{P}(X)$, we say that a set $A \subseteq X$ is an \emph{atom} for $\mathcal{F}$ if $A$ is maximal with respect to the property that for every set $S \in \mathcal{F}$, $S\cap A \in \{\emptyset,A\}$. In other words, each set in our collection must contain all of $A$, or none of it.

We say that an atom $A$ is \emph{homogeneous} if $|A| \geq 2$, and we say that an element $S \in \mathcal{F}$ is a \emph{singleton} if $|S|=1$.

\begin{lem}[Gerbner et al.~\cite{GERB}]
    Given a set $X$ and a collection $\mathcal{F} \subseteq \mathcal{P}(X)$, if $|X| > 2^{|\mathcal{F}|}$, then there is at least one homogeneous atom in $\mathcal{F}$. Furthermore, if $\mathcal{F}$ is a saturated $k$-Sperner system and $H_1$ and $H_2$ are homogeneous for $\mathcal{F}$, then $H_1 = H_2$.\label{lem:homogeneous}
\end{lem}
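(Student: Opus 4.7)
The plan is to prove the two parts separately. For the first, define the \emph{type} of $x \in X$ to be $\tau(x) := \{S \in \mathcal{F} : x \in S\}$. Two elements lie in the same atom exactly when every member of $\mathcal{F}$ contains both or neither, i.e., when they share a type; hence atoms are the equivalence classes of $\tau$, of which there are at most $2^{|\mathcal{F}|}$. When $|X| > 2^{|\mathcal{F}|}$, pigeonhole produces a class of size at least $2$, yielding a homogeneous atom.

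For the uniqueness, I argue by contradiction. Suppose $H_1 \neq H_2$ are both homogeneous, so $|H_1|, |H_2| \geq 2$ and $H_1 \cap H_2 = \emptyset$. Fix $a \in H_1$ and $b \in H_2$ and consider the two sets $T^* := \{a, b\}$ and $T^{**} := X \setminus \{a, b\}$; neither lies in $\mathcal{F}$ because each has a proper nonempty intersection with $H_1$. Apply saturation to $T^*$: any $F \in \mathcal{F}$ strictly below $T^*$ satisfies $F \subseteq \{a,b\}$ and $F \cap H_i \in \{\emptyset, H_i\}$, which forces $F = \emptyset$; any $F \in \mathcal{F}$ strictly above $T^*$ contains $a$ and $b$, hence contains $H_1 \cup H_2$ by homogeneity. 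The guaranteed $(k+1)$-chain therefore places at most one set of $\mathcal{F}$ below $T^*$ and consequently at least $k-1$ sets of $\mathcal{F}$ above, producing a chain of length at least $k-1$ in $\mathcal{F}$ whose elements all contain $H_1 \cup H_2$. Symmetrically, saturation applied to $T^{**}$ yields a chain of length at least $k-1$ in $\mathcal{F}$ whose elements are all disjoint from $H_1 \cup H_2$.

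The final step is to splice the lower and upper chains into a single chain in $\mathcal{F}$ of length $\geq 2k-2$; since $\mathcal{F}$ is $k$-Sperner, this gives the required contradiction whenever $k \geq 3$ (the cases $k \leq 2$ can be verified directly). To bridge the two chains, apply saturation one more time to the set $E \cup \{a,b\}$ where $E$ is the maximum of the lower chain: the induced $(k+1)$-chain extends the lower chain by a single bridging element $F^{\ast} \in \mathcal{F}$ with $F^{\ast} \supseteq E \cup H_1 \cup H_2$, and one can then argue that $F^{\ast}$ can be taken to sit at or below the minimum of the upper chain from the $T^*$-analysis. The principal technical difficulty is precisely this nesting, which rests on the rigidity that any set in $\mathcal{F}$ meeting a single element of $H_i$ must contain all of $H_i$; once verified, the concatenated chain has length at least $(k-1) + 1 + (k-2) = 2k - 2$, exceeding $k$ for $k \geq 3$ and yielding the contradiction.
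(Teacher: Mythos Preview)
The paper does not prove this lemma; it is quoted from Gerbner et al.\ without argument, so there is no in-paper proof to compare against. I will therefore assess your argument on its own.

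Your proof of the first assertion is correct and is the standard pigeonhole argument: atoms are exactly the fibres of the ``type'' map $x\mapsto\{S\in\mathcal{F}:x\in S\}$, there are at most $2^{|\mathcal{F}|}$ fibres, and $|X|>2^{|\mathcal{F}|}$ forces a fibre of size at least two.

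Your proof of uniqueness, however, has a genuine gap at the splicing step. You correctly obtain a chain $E_1\subsetneq\cdots\subsetneq E_{k-1}$ in $\mathcal{F}$ with each $E_i$ disjoint from $H_1\cup H_2$, and a chain $G_1\subsetneq\cdots\subsetneq G_{k-1}$ in $\mathcal{F}$ with each $G_j\supseteq H_1\cup H_2$. But these two chains need not be comparable: for instance $E_{k-1}$ could be $X\setminus(H_1\cup H_2)$ while $G_1=H_1\cup H_2$, and these are incomparable. Your proposed repair, saturating $E_{k-1}\cup\{a,b\}$, does not help. The elements of $\mathcal{F}$ below $E_{k-1}\cup\{a,b\}$ in that new $(k+1)$-chain are subsets of $E_{k-1}$, but there is no reason they are the sets $E_1,\dots,E_{k-1}$; and the elements above contain $E_{k-1}\cup H_1\cup H_2$, which need not lie below $G_1$ at all (again, $G_1$ need not contain $E_{k-1}$). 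The sentence ``one can then argue that $F^\ast$ can be taken to sit at or below the minimum of the upper chain'' is exactly the unproved step, and the ``rigidity'' you invoke only controls intersections with $H_1$ and $H_2$, not the behaviour on $X\setminus(H_1\cup H_2)$, which is where the incomparability lives.

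A cleaner route to uniqueness avoids long chains entirely: if every $F\in\mathcal{F}$ satisfied $H_1\subseteq F\iff H_2\subseteq F$, then $H_1\cup H_2$ would itself satisfy the atom condition, contradicting the maximality of $H_1$; so some $F_0\in\mathcal{F}$ separates $H_1$ from $H_2$, and one then derives a contradiction from saturation applied to small perturbations of $F_0$ rather than trying to glue two unrelated chains.
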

For our purposes, this means that if $n$ is large enough compared to $k$, there will be exactly one homogeneous atom in our saturated $k$-Sperner system. Since we are improving the upper bound from $2^{k-1}$, Lemma~\ref{lem:homogeneous} gives that it is more than sufficient to assume $n$ is of size $2^{2^{k-1}}$. We will distinguish elements in such a system by whether or not they contain the homogeneous atom.

    \begin{defn}
         Let $\mathcal{F}$ be a saturated $k$-Sperner system and let $H$ be homogeneous in $\mathcal{F}$. We say that a set $S \in \mathcal{F}$ is \emph{large} if $H \subseteq S$ or \emph{small} if $S \cap H = \emptyset$. We write the collection of such sets as $\mathcal{F}^\lr$ and $\mathcal{F}^\sm$, respectively. 
    \end{defn}

    Lemma~\ref{lem:satant} establishes a stronger condition on saturated antichains as long as there is a homogeneous set. 

\begin{lem}[Morrison, Noel, Scott~\cite{MNS}]
    Let $\A \subseteq \mathcal{P}(X)$ be a saturated antichain with homogeneous atom $H$. Then every set $S \in \mathcal{P}(X) \setminus \A$ either contains a set in $\A^\sm$ or is contained in a set of $\A^\lr$.\label{lem:satant}
\end{lem}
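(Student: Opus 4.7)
The plan is to argue by contradiction. Suppose some $S \in \mathcal{P}(X) \setminus \A$ contains no member of $\A^\sm$ and is contained in no member of $\A^\lr$. I would split into three cases according to how $S$ meets the atom $H$: (A) $H \subseteq S$; (B) $S \cap H = \emptyset$; and (C) $S \cap H$ is a nonempty proper subset of $H$. Case (C) is essentially immediate: $S$ already splits $H$, so by homogeneity any saturation witness $A \in \A$ comparable to $S$ must satisfy $A \cap H \in \{\emptyset, H\}$, and a short inspection of the four combinations (small versus large, $A \subsetneq S$ versus $S \subsetneq A$) either yields the conclusion or contradicts the fact that $S \cap H$ properly splits $H$.

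The heart of the argument is cases (A) and (B), which are symmetric; I would write out (A) and reduce (B) to it by replacing $S \setminus \{h\}$ with $S \cup \{h\}$ throughout. In case (A), feeding $S$ itself into the saturation hypothesis is too weak: the witness could be some $A \in \A^\lr$ strictly below $S$, which violates neither of the hypothesized obstructions. Instead, I fix any $h \in H$ (which exists, and whose removal leaves a nonempty residue, because $|H| \geq 2$) and apply saturation to the perturbation $S'' := S \setminus \{h\}$. Since $H \setminus \{h\} \neq \emptyset$, one has $\emptyset \subsetneq S'' \cap H \subsetneq H$, so $S''$ splits the atom and hence $S'' \notin \A$ by homogeneity. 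Saturation then yields some $F \in \A$ comparable to $S''$, and the constraint $F \cap H \in \{\emptyset, H\}$ forces $F$ to one side of the partition $\A = \A^\sm \cup \A^\lr$. If $F \subsetneq S''$, then $F \cap H \subseteq H \setminus \{h\} \subsetneq H$ forces $F \cap H = \emptyset$, so $F \in \A^\sm$ and $F \subsetneq S'' \subsetneq S$ contradicts the no-small-subset hypothesis. If $S'' \subsetneq F$, then $F \cap H \supseteq H \setminus \{h\} \neq \emptyset$ forces $F \cap H = H$, so $F \in \A^\lr$ and $F \supseteq S'' \cup H = S$ with $F \neq S$, contradicting the no-large-superset hypothesis.

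The only conceptual hurdle I anticipate is recognizing the right set to feed into saturation: the natural choice $S$ is too weak, but the perturbation $S \setminus \{h\}$ in case (A) (respectively $S \cup \{h\}$ in case (B)) is forced by homogeneity to have any comparable member of $\A$ land on the correct side of $\A^\sm$ versus $\A^\lr$. The hypothesis $|H| \geq 2$ built into the word ``homogeneous'' is used here in an essential way --- with $|H| = 1$ the perturbation would already be disjoint from $H$ and the dichotomy would close on the wrong side. Once the perturbation trick is in hand, the rest is just bookkeeping with the atomic property of $H$.
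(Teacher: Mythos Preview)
The paper does not supply its own proof of this lemma; it is quoted from Morrison, Noel, and Scott~\cite{MNS} and used as a black box. So there is no in-paper argument to compare against, and I can only assess your proof on its own terms.

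Your argument is correct. The case split on how $S$ meets $H$ is natural, and in case (C) the four-way inspection works exactly as you say: if the saturation witness $A$ lies strictly below $S$ and is large, then $H \subseteq A \subsetneq S$ forces $H \subseteq S$; if $A$ lies strictly above $S$ and is small, then $S \cap H \subseteq A \cap H = \emptyset$; the other two subcases give the desired conclusion directly. In cases (A) and (B) the perturbation $S \setminus \{h\}$ (respectively $S \cup \{h\}$) is exactly the right move: it produces a set that splits $H$ and hence lies outside $\A$, and homogeneity then pins any comparable $F \in \A$ to the correct side of the $\A^{\sm}/\A^{\lr}$ partition. Your observation that $|H| \geq 2$ is essential here is also accurate --- it guarantees both that $H \setminus \{h\} \neq \emptyset$ in case (A) and that $\{h\} \subsetneq H$ in case (B), so the perturbed set genuinely splits $H$.

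One small stylistic remark: in case (A), sub-case $S'' \subsetneq F$, you write ``$F \supseteq S'' \cup H = S$ with $F \neq S$''; it may be worth making explicit that $F \neq S$ follows simply because $F \in \A$ while $S \notin \A$. Otherwise the write-up is clean.
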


There is a natural way to partition a $k$-Sperner system $\mathcal{F}$ into a sequence of $k$ pairwise disjoint antichains by iteratively pulling off antichains of minimal elements. 

\begin{defn} For $0 \leq i \leq k-1$, let $\A_i$ be the collection of all minimal elements of $\mathcal{F}\setminus (\bigcup_{j<i}\A_j)$ under inclusion. We say that $(\A_i)_{i=0}^{k-1}$ is the \emph{canonical decomposition} of $\mathcal{F}$ into antichains. 
\end{defn}

\begin{defn}
    A collection $(\mathcal{D}_i)_{i=0}^t$ of subsets is \emph{layered} if, for $1 \leq i \leq t$, every $D \in \mathcal{D}_i$ strictly contains some $D' \in \mathcal{D}_{i-1}$ as a subset.
\end{defn}

Note that the canonical decomposition of any set system is automatically layered. Lemma~\ref{lem:antichainsareSperner} establishes that if one has a layered collection of disjoint saturated antichains, then it is a $k$-Sperner system.

\begin{lem}[Morrison, Noel, Scott~\cite{MNS}]
    If $(\A_i)_{i=0}^{k-1}$ is a layered collection of pairwise disjoint saturated antichains in $\mathcal{P}(X)$, then $\mathcal{F} := \bigcup_{i=0}^{k-1}\A_i$ is a saturated $k$-Sperner system. 
    \label{lem:antichainsareSperner}
\end{lem}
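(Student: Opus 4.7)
The plan is to establish two properties: $\mathcal{F}$ contains no $(k+1)$-chain, and adding any set outside $\mathcal{F}$ forces one.

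The $k$-Sperner part is immediate from pigeonhole. A chain $F_0 \subsetneq F_1 \subsetneq \cdots \subsetneq F_k$ of length $k+1$ in $\mathcal{F}$ distributes $k+1$ sets across only $k$ pairwise disjoint antichains, so two of its members share some $\A_i$ and, being comparable inside an antichain, yield a contradiction.

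The technical heart of the saturation direction is a pair of extension lemmas I would prove first: for every $F \in \A_i$, there exists a strictly larger $B \in \A_{i+1}$ when $i \leq k-2$, and a strictly smaller $B \in \A_{i-1}$ when $i \geq 1$. The downward version is simply the layered hypothesis. For the upward version, saturation of $\A_{i+1}$ applied to $F \notin \A_{i+1}$ produces some $B \in \A_{i+1}$ comparable to $F$; if it were the case that $B \subsetneq F$, then the layered hypothesis applied to $B$ would produce $B' \in \A_i$ with $B' \subsetneq B \subsetneq F$, and the comparable pair $B', F \in \A_i$ would violate the antichain property of $\A_i$. Hence $F \subsetneq B$.

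Given $S \in \mathcal{P}(X) \setminus \mathcal{F}$, I would then set $i^{*} = \max\{i : \exists A \in \A_i,\ A \subsetneq S\}$, with $i^{*} = -1$ if no such $i$ exists. When $i^{*} \geq 0$, iterating the downward extension from a witness $A \in \A_{i^{*}}$ produces $A_0 \subsetneq A_1 \subsetneq \cdots \subsetneq A_{i^{*}} = A \subsetneq S$. When $i^{*} \leq k-2$, maximality of $i^{*}$ ensures that no element of $\A_{i^{*}+1}$ is contained in $S$, so saturation of $\A_{i^{*}+1}$ forces some $B \in \A_{i^{*}+1}$ with $S \subsetneq B$; iterating upward extension then gives $S \subsetneq B = B_{i^{*}+1} \subsetneq B_{i^{*}+2} \subsetneq \cdots \subsetneq B_{k-1}$. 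Splicing at $S$ yields a chain of exactly $(i^{*}+1) + 1 + (k-1-i^{*}) = k+1$ sets in $\mathcal{F} \cup \{S\}$, with the obvious simplifications in the edge cases $i^{*} = -1$ (empty lower half) and $i^{*} = k-1$ (empty upper half).

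The main obstacle is the asymmetry of the layered definition, which supplies predecessors but not successors. The upward extension lemma is exactly where this gap is bridged, and its proof relies on saturation being assumed at every individual level rather than merely for $\mathcal{F}$ as a whole; without that, the argument would collapse.
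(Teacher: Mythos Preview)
Your argument is correct. The paper itself does not supply a proof of this lemma---it is quoted from Morrison, Noel, and Scott~\cite{MNS}---so there is no in-paper proof to compare against; but your approach (pigeonhole for the $k$-Sperner half, then an upward-extension lemma deduced from saturation plus layering, and finally splicing a downward and upward chain at $S$ via the index $i^*$) is exactly the standard route and matches the original argument in~\cite{MNS}.
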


So, then, to find a good saturated $k$-Sperner system, it suffices to find a collection of layered pairwise disjoint saturated antichains. 

\begin{lem}[Morrison, Noel, Scott~\cite{MNS}]
    A collection of pairwise disjoint saturated antichains $(\A_i)_{i=0}^{k-1}$, which have the same homogeneous atom, is layered if and only if $(\A_i^\sm)_{i=0}^{k-1}$ is layered.
    \label{lem:smallimplieslayered}
\end{lem}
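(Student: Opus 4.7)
The plan is to prove the two directions separately. The forward direction, that $(\A_i)$ layered implies $(\A_i^\sm)$ layered, is essentially immediate: for $S \in \A_i^\sm \subseteq \A_i$, the layered hypothesis produces $S' \in \A_{i-1}$ with $S' \subsetneq S$, and since $S \cap H = \emptyset$ implies $S' \cap H = \emptyset$, we have $S' \in \A_{i-1}^\sm$.

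For the reverse direction, fix $S \in \A_i$ with $i \geq 1$; we must find a proper subset in $\A_{i-1}$. If $S \in \A_i^\sm$ this is handed to us by the hypothesis. The interesting case is $S \in \A_i^\lr$, which I would tackle by contradiction, supposing no element of $\A_{i-1}$ is a proper subset of $S$. Since the antichains are pairwise disjoint, $S \notin \A_{i-1}$, and Lemma~\ref{lem:satant} applied to $\A_{i-1}$ gives two alternatives: either $S$ contains some element of $\A_{i-1}^\sm$ (which, being disjoint from $H$, would be a proper subset of $S \supseteq H$ and contradict our assumption), or $S$ is contained in some $T_1 \in \A_{i-1}^\lr$. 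So we are forced into the second alternative.

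Now apply Lemma~\ref{lem:satant} to $T_1$ relative to $\A_i$, using that $T_1 \notin \A_i$ by disjointness. If $T_1$ is contained in some $T_2 \in \A_i^\lr$, then $S \subseteq T_1 \subseteq T_2$ forces $S = T_2$ by the antichain property of $\A_i$, whence $T_1 = S$, contradicting the disjointness of $\A_{i-1}$ and $\A_i$. If instead $T_1$ contains some $U \in \A_i^\sm$, then applying the hypothesis that $(\A_i^\sm)$ is layered produces $V \in \A_{i-1}^\sm$ with $V \subsetneq U \subseteq T_1$; but then $V$ and $T_1$ are distinct comparable elements of the antichain $\A_{i-1}$, a contradiction.

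The main obstacle is precisely handling large sets in $\A_i$: the layered hypothesis on $(\A_i^\sm)$ gives no direct information about them, so one must relay twice through Lemma~\ref{lem:satant}---once upward from $S$ into $\A_{i-1}^\lr$ and once downward from $T_1$ to some element of $\A_i$---before the antichain and disjointness properties can be leveraged to close out the contradiction.
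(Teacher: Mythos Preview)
The paper does not supply its own proof of this lemma; it is quoted as a result of Morrison, Noel, and Scott~\cite{MNS} and used without argument. Your proof is correct and is the natural one: the forward direction is immediate, and for the reverse direction the two-step relay through Lemma~\ref{lem:satant}---first placing a large $S \in \A_i$ below some $T_1 \in \A_{i-1}^\lr$, then testing $T_1$ against $\A_i$---together with the antichain property, pairwise disjointness, and the layered hypothesis on small parts, closes every branch.
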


Notably, if we combine this condition with the layered collection condition and guarantee the family has a homogeneous set, then two conditions are sufficient to verify the upper bound construction in Section~\ref{sec:upperbound}:
\begin{enumerate}
    \item $\A_i$ must be a saturated antichain.
    \item Each small set in $\A_i$ must contain a small element in $\A_{i-1}$, $1\leq i\leq k-2$.
\end{enumerate}

By Lemma~\ref{lem:antichainsareSperner}, it sufficies to have a collection of layered antichains and by Lemma~\ref{lem:smallimplieslayered}, the layered condition is verified if each element in $\A_i^\sm$ strictly contains an element in $\A_{i-1}^\sm$, $1\leq i\leq k-2$. 

As to the lower bound, Lemma~\ref{lem:decomp} gives that any saturated $k$-Sperner system with a homogeneous set has a canonical decomposition into saturated antichains and so any such system with fewer than $2^{k-1}$ elements has such a decomposition, provided $n$ is large enough. 

\begin{lem}[Morrison, Noel, Scott~\cite{MNS}]
    Let $\mathcal{F} \in \mathcal{P}(X)$ be a saturated $k$-Sperner system with homogeneous atom $H$ and canonical decomposition $(\A_i)_{i=0}^{k-1}$. Then $\A_i$ is saturated for all $i$. \label{lem:decomp}
\end{lem}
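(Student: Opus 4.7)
Each $\mathcal{A}_i$ is an antichain by construction, so the task is saturation: for every $S\in\mathcal{P}(X)\setminus\mathcal{A}_i$, produce some $A\in\mathcal{A}_i$ comparable with $S$. The plan is to split the argument according to whether $S$ lies in $\mathcal{F}$.

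First I would record an easy observation about the canonical decomposition: whenever $D_0\subsetneq D_1\subsetneq\cdots\subsetneq D_r$ is a chain in $\mathcal{F}$ with $D_q\in\mathcal{A}_{l_q}$, the indices satisfy $l_0<l_1<\cdots<l_r$. This is immediate from the iterative peeling of minimal elements, since a set with a strict $\mathcal{F}$-predecessor left in the pile cannot be picked at that step. The key substep derived from this is the following: if $C_0\subsetneq\cdots\subsetneq C_k$ is a $(k+1)$-chain in $\mathcal{F}\cup\{S\}$ and $S=C_p$, then the chain-length bounds force $l_q\geq q$ for $q<p$ and $l_q\leq q-1$ for $q>p$, and together with strict monotonicity on each side of $p$ this makes the $k$ values $\{l_q:q\neq p\}$ exhaust $\{0,1,\ldots,k-1\}$. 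In particular, one of the chain elements lies in $\mathcal{A}_i$ and is comparable with $S$.

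For $S\notin\mathcal{F}$ this substep finishes the case: saturation of $\mathcal{F}$ directly supplies the required $(k+1)$-chain. For $S\in\mathcal{F}$, say $S\in\mathcal{A}_j$ with $j\neq i$, adding $S$ back to $\mathcal{F}$ produces no new chain, so I would instead use the homogeneous atom $H$ as a probe: set $T:=S\cup\{y\}$ for some $y\in H$ when $S\cap H=\emptyset$, and $T:=S\setminus\{y\}$ when $H\subseteq S$. Because $|H|\geq 2$, in each subcase $T$ meets $H$ nontrivially and properly, violating the atom property and therefore forcing $T\notin\mathcal{F}$. Applying the previous case to $T$ yields some $A\in\mathcal{A}_i$ comparable with $T$, and a short argument using $A\cap H\in\{\emptyset,H\}$ transfers comparability from $T$ to $S$: for instance, when $T=S\cup\{y\}$ and $A\subseteq T$, having $y\in A$ would force $H\subseteq A\subseteq S\cup\{y\}$ and hence $H\subseteq\{y\}$, contradicting $|H|\geq 2$, so $y\notin A$ and $A\subseteq S$; the remaining subcases (including the dual $H\subseteq S$ scenario) are handled symmetrically.

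The main obstacle is this second case. Because $\mathcal{F}$ is already saturated, one cannot invoke its saturation at $S$ itself, and instead must introduce the auxiliary set $T$ and then argue back from $T$ to $S$. The homogeneous-atom trick is precisely what makes both steps—forcing $T\notin\mathcal{F}$ and transferring comparability—go through, and this is where the hypothesis that $\mathcal{F}$ possesses a homogeneous atom (with $|H|\geq 2$) is essential.
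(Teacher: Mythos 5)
Your argument is correct in its essentials, and since the paper cites this lemma from Morrison, Noel, and Scott without reproducing a proof, I can only assess it on its own terms. The structure is sound: you split on $S\in\mathcal{F}$ versus $S\notin\mathcal{F}$, use saturation of $\mathcal{F}$ directly in the second case, and use the homogeneous atom to manufacture an auxiliary $T\notin\mathcal{F}$ in the first, then transfer comparability back. The transfer step is handled correctly in all four subcases, and your observation that $y\notin A$ (respectively $y\in A$) is forced by $A\cap H\in\{\emptyset,H\}$ together with $|H|\geq 2$ is precisely the right use of the homogeneous-atom hypothesis.

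There is one small gap in the exhaustion step that should be patched. You claim the one-sided monotonicity of the $l_q$ together with the bounds $l_q\geq q$ (for $q<p$) and $l_q\leq q-1$ (for $q>p$) already forces $\{l_q:q\neq p\}=\{0,\ldots,k-1\}$. As stated this is not quite enough: for example with $k=3$, $p=1$, the assignment $l_0=1$, $l_2=1$, $l_3=2$ satisfies every inequality you list but does not exhaust $\{0,1,2\}$. What is actually needed is distinctness of the $k$ values, after which exhaustion is automatic by pigeonhole and the bounds are superfluous. Distinctness follows from either of two quick observations you have already set up: (i) $C_{p-1}\subsetneq C_{p+1}$ directly, so your monotonicity lemma applies \emph{across} $p$ and gives the single chain $l_0<\cdots<l_{p-1}<l_{p+1}<\cdots<l_k$; or (ii) the sets $C_q$ ($q\neq p$) are pairwise comparable while each $\mathcal{A}_i$ is an antichain, so at most one $C_q$ lies in any given $\mathcal{A}_i$. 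Either remark closes the gap, and with it the proof is complete.
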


\section{Further Characteristics of Layered Saturated Antichain}

First, we observe that the empty set and the complete set $[n]$ will always be contained in a minimal $k$-Sperner system for $k\geq 2$. Each forms a saturated antichain of size 1, which is as small as possible for $\A_0$ and $\A_{k-1}$, respectively.
\begin{prop}
    Let $\mathcal{F} \in \mathcal{P}(X)$ be a minimal saturated $k$-Sperner system with canonical decomposition $(\A_i)_{i=0}^{k-1}$. Then $\A_0=\{\emptyset\}$, $\A_{k-1}=\{[n]\}$, and for $i\in\{1,\ldots,k-2\}$, each small element $A \in \A_i^\sm$ will have size at least $i$, and each large element $A \in \A_i^\lr$ will have size at most $i+n-(k-1)$.
    \label{prop:sizes}
\end{prop}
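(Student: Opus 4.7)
The plan is to focus the main effort on the first claim, $\A_0=\{\emptyset\}$; the others then follow quickly. By Lemma~\ref{lem:decomp}, $\A_0$ is a saturated (maximal) antichain in $\mathcal{P}(X)$, and the only maximal antichain containing $\emptyset$ is $\{\emptyset\}$ itself, so it suffices to show $\emptyset\in\mathcal{F}$. Suppose for contradiction that $\emptyset\notin\mathcal{F}$. Then $\emptyset\notin\A_0$, and since the only singleton maximal antichains in $\mathcal{P}(X)$ are $\{\emptyset\}$ and $\{X\}$ (the latter would force $\mathcal{F}=\{X\}$, which fails saturation for $k\geq 2$), we must have $|\A_0|\geq 2$. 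I plan to contradict the minimality of $|\mathcal{F}|$ by showing that $\mathcal{F}':=(\mathcal{F}\setminus\A_0)\cup\{\emptyset\}$, which has strictly fewer elements, is itself a saturated $k$-Sperner system.

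Checking that $\mathcal{F}'$ is $k$-Sperner is routine: every $k$-chain in $\mathcal{F}$ has its minimum in $\A_0$, so $\mathcal{F}\setminus\A_0$ has maximum chain length at most $k-1$, and prepending $\emptyset$ raises it back to at most $k$. Saturation is where the work lies: for every $T\notin\mathcal{F}'$ I must produce a $(k+1)$-chain in $\mathcal{F}'\cup\{T\}$. When $T\notin\mathcal{F}$, this falls out of saturation of $\mathcal{F}$ applied to $T$: extend the bottom of the resulting $(k+1)$-chain downward in $\mathcal{F}$ so that it lies in $\A_0\cup\{T\}$, and then either replace that $\A_0$-bottom by $\emptyset$, or skip past a first $\A_0$-successor of $T$, in either case producing the required $(k+1)$-chain in $\mathcal{F}'\cup\{T\}$.

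The main obstacle is the case $T\in\A_0$: here I need $T$ to sit at the bottom of a $k$-chain in $\mathcal{F}$, so that prepending $\emptyset$ yields a $(k+1)$-chain $\emptyset\subsetneq T\subsetneq T_1\subsetneq\cdots\subsetneq T_{k-1}$ in $\mathcal{F}'\cup\{T\}$. My plan is to pick some $x\in X\setminus T$ such that no member of $\mathcal{F}$ both contains $x$ and is contained in $T\cup\{x\}$---for $n$ large, the number of ``bad'' choices of $x$ is at most $|\mathcal{F}|$, so such an $x$ exists---and then apply saturation of $\mathcal{F}$ to $T':=T\cup\{x\}\notin\mathcal{F}$. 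In the resulting $(k+1)$-chain $C_0\subsetneq C_1\subsetneq\cdots\subsetneq C_k$ containing $T'=C_j$, the choice of $x$ rules out $x\in C_{j-1}$, so $C_{j-1}\subsetneq T'$ forces $C_{j-1}\subseteq T$; minimality of $T$ in $\mathcal{F}$ then upgrades this to $C_{j-1}=T$, and the absence of any $\mathcal{F}$-element strictly below $T$ forces $j=1$ and $C_0=T$. Excising $T'$ from the chain produces the required $k$-chain $T\subsetneq C_2\subsetneq\cdots\subsetneq C_k$ in $\mathcal{F}$. The symmetric claim $\A_{k-1}=\{[n]\}$ follows by applying the same argument to the complement family $\{X\setminus F:F\in\mathcal{F}\}$.

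With $\A_0=\{\emptyset\}$ and $\A_{k-1}=\{[n]\}$ in hand, the size bounds drop out of the layered structure. By Lemma~\ref{lem:smallimplieslayered}, $(\A_i^\sm)_{i=0}^{k-1}$ is layered, so every $A\in\A_i^\sm$ sits atop a strict descending chain $A=A^{(i)}\supsetneq A^{(i-1)}\supsetneq\cdots\supsetneq A^{(0)}$ with $A^{(j)}\in\A_j^\sm$ and $A^{(0)}=\emptyset$, forcing $|A|\geq i$. The dual bound $|A|\leq i+n-(k-1)$ for $A\in\A_i^\lr$ follows from the analogous ascending chain of large sets up to $[n]$, equivalently by applying the small-set bound to the complement family.
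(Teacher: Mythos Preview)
Your plan is far more detailed than the paper's proof, which treats $\A_0=\{\emptyset\}$ and $\A_{k-1}=\{[n]\}$ as a one-line observation preceding the proposition and then argues only the size bounds via the same layered induction you sketch in your final paragraph. Your replacement construction $\F'=(\F\setminus\A_0)\cup\{\emptyset\}$ is a sound way to justify that observation, and the $k$-Sperner check and Case~1 of saturation are fine, but Case~2 has a genuine gap.

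The ``good $x$'' step requires $|X\setminus T|$ to exceed the number of bad elements, and enlarging $n$ does \emph{not} help when $T\in\A_0^{\lr}$: since $H\subseteq T$, the set $X\setminus T$ is contained in the set of singleton atoms, whose number is bounded independently of $n$. A concrete failure: for $k=3$ with singleton atoms $1,2$ and homogeneous atom $H$, the family $\F=\bigl\{\{1\},\{2\},\{1,2\},H,H\cup\{1\},H\cup\{2\},[n]\bigr\}$ is saturated $3$-Sperner with $\emptyset\notin\F$, $T=H\in\A_0^{\lr}$, $X\setminus T=\{1,2\}$, and both choices of $x$ are bad (witnessed by $F=\{1\}$ and $F=\{2\}$ respectively). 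The repair is to drop the $x$-trick and climb through large sets instead: for $C\in\A_i$, Lemma~\ref{lem:satant} applied to the saturated antichain $\A_{i+1}$ says either $C\supseteq B$ for some $B\in\A_{i+1}^{\sm}$---impossible, since no element of $\A_{i+1}$ can lie below an element of $\A_i$ in the canonical decomposition---or $C\subsetneq C'$ for some $C'\in\A_{i+1}^{\lr}$. Iterating from $T\in\A_0$ yields a chain $T\subsetneq C_1\subsetneq\cdots\subsetneq C_{k-1}$ with $C_j\in\A_j^{\lr}$, and prepending $\emptyset$ gives the $(k+1)$-chain in $\F'\cup\{T\}$ you need. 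This ``every $C\in\A_i^{\lr}$ lies strictly inside some $C'\in\A_{i+1}^{\lr}$'' is precisely the step the paper's own induction invokes for the size bound on large elements.
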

\begin{proof}
    Because of the layered condition, every set in $\A_i^\sm$ must have at least one more element than some set in $\A_{i-1}^\sm$, and every set in $\A_i^\lr$ must have at least one fewer element than some set in $\A_{i+1}^\lr$. So the result follows by induction. 
\end{proof}
\begin{lem}
    Let $\mathcal{F} \in \mathcal{P}(X)$ be a minimal saturated $k$-Sperner system  with canonical decomposition $(\A_i)_{i=0}^{k-1}$ and homogeneous atom $H$. Then $\A_1$ will consist of at least $k-2$ singleton atoms in $\A_1^{\sm}$, and one element in $\A_1^\lr$.
    By symmetry, $\A_{k-2}$ will consist of at least $k-2$ elements of size $n-1$ in $\A_{k-2}^{\lr}$ and one element in $\A_{k-2}^{\sm}$.
    \label{lem:A1}
\end{lem}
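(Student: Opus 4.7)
The plan has two parts, one for each claim. For the bound $|\A_1^\lr| \geq 1$, I would apply Lemma~\ref{lem:satant} to $H$, using that $\A_1$ is a saturated antichain by Lemma~\ref{lem:decomp}. Every element of $\A_1^\sm$ is a nonempty subset of $X \setminus H$, so $H$ contains no element of $\A_1^\sm$. Lemma~\ref{lem:satant} then forces either $H \in \A_1$ (so $H \in \A_1^\lr$) or $H$ to be strictly contained in some $L \in \A_1^\lr$; in either case the large side is nonempty.

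For the singleton count, I would fix any $L \in \A_1^\lr$ produced in the previous step. By Proposition~\ref{prop:sizes}, $|L| \leq n-k+2$, so $X \setminus L$ has at least $k-2$ elements, all lying outside $H$ since $H \subseteq L$. For each such $y$, the only subset of $\{y\}$ that could belong to $\A_1^\sm$ is $\{y\}$ itself (as $\emptyset \in \A_0$), so Lemma~\ref{lem:satant} gives either $\{y\} \in \A_1^\sm$ or $\{y\} \subseteq L'$ for some $L' \in \A_1^\lr$ with $L' \neq L$ (since $y \notin L$). The plan is to rule out the second case by showing that minimality forces $|\A_1^\lr| = 1$: if $L_1, L_2 \in \A_1^\lr$ are distinct (hence incomparable by antichain), then $L_1 \cap L_2 \supseteq H$ is a large set strictly below both, and a local surgery—replacing $\{L_1, L_2\}$ by $\{L_1 \cap L_2\}$ and propagating the change upward through the canonical decomposition—should yield a strictly smaller saturated $k$-Sperner system, contradicting minimality. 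Once $\A_1^\lr = \{L\}$ is established, every $y \in X \setminus L$ is forced to produce $\{y\} \in \A_1^\sm$, yielding at least $k-2$ singleton atoms in $\A_1^\sm$.

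The main obstacle will be verifying the local surgery: one must show that after removing $L_1, L_2$ and inserting $L_1 \cap L_2$, the resulting family remains both $k$-Sperner and saturated. This requires tracking which $(k+1)$-chain certificates coming from the original saturation of $\F$ are disrupted, then rerouting them through $L_1 \cap L_2$, while simultaneously checking that no new $(k+1)$-chain is created by the shifted decomposition. The symmetric statement for $\A_{k-2}$ then follows by applying the whole argument to the complement family $\{[n] \setminus F : F \in \F\}$, which is itself a minimum saturated $k$-Sperner system with homogeneous atom $H$ and whose canonical decomposition reverses the roles of $\A_1$ and $\A_{k-2}$.
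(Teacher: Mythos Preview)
Your outline is sound up through the point where you fix $L\in\A_1^\lr$, invoke Proposition~\ref{prop:sizes} to get $|X\setminus L|\geq k-2$, and apply Lemma~\ref{lem:satant} to each $\{y\}$ with $y\in X\setminus L$. The gap is in the surgery you propose to force $|\A_1^\lr|=1$.

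Replacing $\{L_1,L_2\}$ by $\{L_1\cap L_2\}$ does \emph{not} preserve saturation of $\A_1$. Take $T=L_1$. In the modified antichain $\A_1'=(\A_1\setminus\{L_1,L_2\})\cup\{L_1\cap L_2\}$, the set $L_1$ contains no $S\in\A_1^\sm$ (the original $\A_1$ was an antichain), is not contained in any $L\in\A_1^\lr\setminus\{L_1,L_2\}$ (same reason), and is not contained in $L_1\cap L_2$ (since $L_1\not\subseteq L_2$). So $L_1$ is uncovered and $\A_1'$ is not saturated. At the level of the full family, if $|\A_1^\lr|\geq 3$ and some $L_3\in\A_1^\lr\setminus\{L_1,L_2\}$ satisfies $L_1\cap L_2\subsetneq L_3$ (which is automatic whenever $L_1\cap L_2=H$), then any $k$-chain in $\F$ through $L_3$ extends to a $(k+1)$-chain in $\F'$ by inserting $L_1\cap L_2$ between $\emptyset$ and $L_3$; so $\F'$ is not even $k$-Sperner. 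Your phrase ``propagating the change upward'' does not address either failure, and re-saturating after the fact could add back more sets than you removed.

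The paper avoids this entirely by working on the \emph{small} side first: a reduction operation iteratively shrinks each non-singleton member of $\A_1^\sm$ to a single atom, deleting that atom from every other set in $\A_1$ and cleaning up any resulting comparabilities. The key point is that each of these moves preserves saturation (if $T\subseteq A$ before removing $x$, then either $T\subseteq A\setminus\{x\}$ or $T\supseteq\{x\}$). Only \emph{after} $\A^\sm$ consists of singletons does one argue $|\A^\lr|=1$, via the clean observation that the set $S$ of all atoms outside $\A^\sm$ must simultaneously contain and be contained in every member of $\A^\lr$. Your direct attack on $\A_1^\lr$ short-circuits this, but the intersection move is the wrong primitive: it shrinks large sets, which loses coverage from above, whereas the paper's move shrinks small sets, which only strengthens coverage from below.
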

\begin{proof}
    We first prove that $\A_1^{\sm}$ will contain only singletons. Suppose that $\mathcal{F}$ is a minimal saturated $k$-Sperner system, and $\A_1$ contains small elements of size $>1$. Consider the following reduction operation on antichains:
    \begin{quote}
        \textbf{Reduction operation:} 
        \begin{enumerate}
            \item   Set $\A^{\sm}=\A_1^{\sm}$ and $\A^{\lr}=\A_1^{\lr}$
            \item  Choose an atom, $x$, in some non-singleton element, $\{x\}\cup S\in\A^{\sm}$. \label{it:chooseatom}
            \item Replace $\{x\}\cup S$ with $\{x\}$ in $\A$.\label{it:recursepoint}
            \item If $x\in A$ for $A\in\A-\left\{\{x\}\right\}$, then replace $A$ with $A-\{x\}$ in $\A$.\label{it:eliminatex}
            \item Resolve any pairs of elements that contain each other, $A\subset B$:\label{it:cases}
                \begin{itemize}
                    \item If $A,B\in\A^{\sm}$, then remove $B$ from $\A$. 
                    \item If $A,B\in\A^{\lr}$, then remove $A$ from $\A$.
                    \item If $A\in\A^\sm$ and $B\in\A^\lr$, then reassign $x$ to be an arbitrary atom of $A$ and go to \eqref{it:recursepoint}.
                \end{itemize}
            \item If $\A^{\sm}$ consists of singletons, stop and return $\A$. Otherwise, go to~\eqref{it:chooseatom}.
        \end{enumerate}
    \end{quote}

    Note that if $\A_1^{\sm}$ consists only of atoms, then the reduction operation does nothing. 

    We must verify that:
        \begin{itemize}
            \item The reduction operation terminates.
            \item The resulting set $\A$ obeys $\bigl|\A\bigr|\leq \bigl|\A_{1}\bigr|$.
            \item The resulting set $\A$ is an antichain.
            \item The resulting set $\A$ is saturated
            \item The resulting set $\A^{\sm}$ consists of singletons.
            \item $\A_0$, $\A$, $(\A_i)_{i=2}^{k-1}$ is layered.
        \end{itemize}

    As to the verification,
    \begin{itemize}
        \item The algorithm terminates because step~\eqref{it:eliminatex} ensures that no other member of $\A$ will contain the atom $x$ so each iteration of the inner loop will use a different one of the finite set of atoms. The outer loop terminates because we reduce the number of non-singleton elements of $\A^{\sm}$ by at least 1 for each iteration. 
        \item The set $\A$ obeys $\bigl|\A\bigr|\leq \bigl|\A_{1}\bigr|$ because throughout the algorithm, no elements are added to $\A$ and some may be removed.
        \item The set $\A$ is an antichain because step~\eqref{it:cases} ensures that containments will be eliminated either directly or by step~\eqref{it:eliminatex}. 
        \item After each step of the algorithm, the set $\A$ is always saturated. To see this, consider any element $T \notin \A\setminus\left(\{x\}\cup S\right)$ at the start of any step.
        \begin{itemize}
            \item For step~\eqref{it:recursepoint}, if $T\supset \{x\}\cup S$, then $T\supset\{x\}$. If $T \subset A$ for some $A \in \A^\lr$, then this will continue to hold. If $T=\{x\} \cup S$, then $T$ is no longer in $\A$, but contains $\{x\}$.
            \item For step~\eqref{it:eliminatex}, if $T\supset A$, then $T\supset A-\{x\}$. If $T \subset A$, then either $T \subset A-\{x\}$ or $T\supset\{x\}$. If $T=A$, then $T$ is no longer in $\A$, but contains $\{x\}$.
            \item For step~\eqref{it:cases}, if $T\supset B\supset A$, then $T\supset A$. If $T \subset A \subset B$, then $T \subset B$. If $A,B\in\A^{\sm}$ and $T=B$, then $T\supset A$. If $A,B\in\A^{\lr}$ and $T=A$, then $T\subset B$.
        \end{itemize}
        So after each step, $\A$ still saturates all elements not in $\A$.
        \item The set $\A^{\sm}$ consists of singletons because if $\A^{\sm}$ ever contains a non-singleton, then~\eqref{it:recursepoint} will ensure it is replaced by one. 
        \item It remains to show the family is layered. By Lemma~\ref{lem:smallimplieslayered} it suffices to check $\mathcal{F}^{\sm}$. Every element $A \in \A^\sm$ will contain the empty set, which is $\A_0$. As to $\A_2^{\sm}$, let $A\in\A_2^{\sm}$ and $B\in\A_1^{\sm}$ such that $B\subset A$. Then since $B$ is either replaced by a subset (steps~\eqref{it:recursepoint} or~\eqref{it:eliminatex}) or is deleted in favor of a subset already in the family (step~\eqref{it:cases}), it must be the case that every member of $\A_2^{\sm}$ is above some member of $\A^{\sm}$. Since the other sets in $\mathcal{F}^{\sm}$ are unchanged, the family is layered. 
    \end{itemize}
    
    So $\A$ is a layered, saturated antichain and $\A^{\sm}$ consists of singletons. It remains to prove that $\bigl|\A^{\lr}\bigr|=1$ and $\bigl|\A^{\sm}\bigr|\geq k-2$.

    Suppose that $\bigl|\A^\lr\bigr| \geq 2$. Consider the element $S$ consisting of all those atoms which are not in $\A^\sm$. By definition of antichain, every member of $\A^{\lr}$ is a subset of $S$. By Lemma~\ref{lem:satant} and the fact that $S$ is above no member of $\A^{\sm}$, it must be either in $\A^{\lr}$ or a subset of a member of that set. But every member of $\A^{\lr}$ must be a subset of $S$ by definition. Thus $S$ is the unique member of $\A^{\lr}$.
    
    Finally, by the definition of layered, there exists a chain from $S$ to $[n]$, each of which are members of $\mathcal{F}$. Specifically, the chain starts with $S$ and contains members of $\A_2^{\lr},\ldots,A_{k-2}^{\lr}$, concluding with $[n]$. This results from adding at least one new element of $[n]\setminus S$ to $S$ each time, $k-2$ times. Thus, there must be at least $k-2$ singletons in $\A^{\sm}$.
\end{proof}

\section{Composition of Saturated Systems}

Once we have a saturated $k$-Sperner system of size less than $2^{k-1}$, there exists a method that gives an upper bound for larger values of $k$.

\begin{lem}[Morrison, Noel, Scott~\cite{MNS}]
    Let $X_1$ and $X_2$ be disjoint sets, and let $\mathcal{F}_1$ and $\mathcal{F}_2$ be saturated $k_1$- and $k_2$-Sperner systems, respectively, and let $H_1 \subset X_1$ and $H_2 \subset X_2$ be homogeneous. Then $$\mathcal{G} := \{A \cup B: A \in \mathcal{F}_1^\sm, B \in \mathcal{F}_2^\sm\} \cup \{S \cup T: S \in \mathcal{F}_1^\lr, T \in \mathcal{F}_2^\lr\}$$
    is a saturated $(k_1 + k_2 - 2)$-Sperner system on $\mathcal{P}(X_1 \cup X_2)$ with homogeneous atom $H_1 \cup H_2$ and cardinality $\bigl|\mathcal{G}\bigr| = \bigl|\mathcal{F}_1^\sm\bigr|\bigl|\mathcal{F}_2^\sm\bigr|
    +\bigl|\mathcal{F}_1^\lr\bigr|\bigl|\mathcal{F}_2^\lr\bigr|$.
    \label{lem:bootstrap}
\end{lem}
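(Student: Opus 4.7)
My plan is to verify the bootstrap lemma in four stages: the cardinality formula, that $H_1\cup H_2$ is homogeneous for $\mathcal{G}$, the Sperner bound, and saturation.

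The first two stages are structural. Because $H_1\subseteq X_1$ and $H_2\subseteq X_2$ live in disjoint ground sets, every $G\in\mathcal{G}$ has $G\cap(H_1\cup H_2)$ equal to $\emptyset$ (in the small-small union, since $A\cap H_1=B\cap H_2=\emptyset$) or to all of $H_1\cup H_2$ (in the large-large union, since $H_1\subseteq S$ and $H_2\subseteq T$). This makes $H_1\cup H_2$ an atom of $\mathcal{G}$, homogeneous since $|H_1\cup H_2|\geq 4$, and identifies the two pieces of the construction with $\mathcal{G}^\sm$ and $\mathcal{G}^\lr$. Because $X_1\cap X_2=\emptyset$, the map $(A,B)\mapsto A\cup B$ is injective on each piece, giving the counting formula.

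For the Sperner bound, I would take a chain $C_1\subsetneq\cdots\subsetneq C_\ell$ in $\mathcal{G}$ and project it coordinate-wise onto $X_1$ and $X_2$. Writing $p_j$ for the number of strict increases in the $X_j$-projection, the distinct projected values form a chain in $\mathcal{F}_j$, so $p_j+1\leq k_j$. Each chain step is a strict increase in at least one coordinate, and every step that transitions from a small to a large element of $\mathcal{G}$ is a strict increase in \emph{both} coordinates (such a step inserts all of $H_1$ and all of $H_2$, both nonempty). When the chain has at least one small and at least one large element, this double-count at the transition step yields $\ell\leq p_1+p_2\leq(k_1-1)+(k_2-1)=k_1+k_2-2$. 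The all-small and all-large degenerate cases need the subsidiary fact, derivable from saturation of $\mathcal{F}_j$ together with $|H_j|\geq 2$, that the longest chain in $\mathcal{F}_j^\sm$ alone (or in $\mathcal{F}_j^\lr$ alone) has length at most $k_j-1$: one attempts to splice a set of the form $A_{\max}\cup H_j$ above the hypothetical maximal small chain and extracts a forbidden $(k_j+1)$-chain.

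For saturation, let $G\in\mathcal{P}(X_1\cup X_2)\setminus\mathcal{G}$ and write $A=G\cap X_1$, $B=G\cap X_2$. I would apply the saturation of $\mathcal{F}_j$ to produce a chain of length $k_j$ in $\mathcal{F}_j\cup\{A\}$ (resp.\ $\mathcal{F}_j\cup\{B\}$) passing through $A$ (resp.\ $B$); each such chain decomposes at the unique level where $H_j$ is inserted, into a small prefix and a large suffix. I would then interleave the two coordinate chains into a single lattice path in $X_1\cup X_2$, fusing the two small-to-large transitions into a single transition of $\mathcal{G}$ and routing the path through the vertex $(A,B)=G$. By choosing the interleaving carefully, the combined chain lies in $\mathcal{G}\cup\{G\}$, contains $G$, and has length $k_1+k_2-1$, witnessing saturation.

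The main obstacle I expect is the saturation step, particularly the subcase in which $G$ fails to respect one or both atoms (so that $G\cap H_j$ is a proper nonempty subset of $H_j$ for some $j$). In that case $A$ or $B$ is outside $\mathcal{F}_j$, and the pair $(A,B)$ may sit in the "mixed" block of the product lattice (say $A$ small and $B$ large) that is not part of $\mathcal{G}$ at all, so the combined path must be bent diagonally to pass through $G$. Homogeneity of each $H_j$ -- the fact that every set of $\mathcal{F}_j$ contains $H_j$ entirely or is disjoint from it -- constrains where the coordinate chains cross their atoms, forcing the two chains to synchronize so that the interleaved path can still be bent through $G$ without shortening below length $k_1+k_2-1$. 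Verifying this synchronization in every subcase, and checking that the $k_1+k_2-1$ interleaved sets are all distinct, is where the technical heart of the argument lies.
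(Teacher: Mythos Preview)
The paper does not give its own proof of this lemma: it is stated with attribution to Morrison, Noel, and Scott~\cite{MNS} and then immediately applied, so there is nothing in the present paper to compare your argument against. Your four-stage outline (cardinality, homogeneity, no long chain, saturation) is the natural direct route and matches in spirit what one finds in~\cite{MNS}.

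Two remarks on the sketch itself. First, your handling of the ``all-small'' (or ``all-large'') degenerate case rests on the claim that the longest chain inside $\mathcal{F}_j^{\sm}$ has length at most $k_j-1$. The splice-in-$H_j$ idea you describe does work, but it needs the observation that any $F\in\mathcal{F}_j$ strictly containing $A_{\max}\cup\{h\}$ (for a single $h\in H_j$) must be large, while any $F\in\mathcal{F}_j$ strictly below it must be small and hence contained in $A_{\max}$; from this one concatenates to force a $(k_j+1)$-chain in $\mathcal{F}_j$. You should make this explicit rather than leave it as a parenthetical. Second, in the saturation step your instinct about the ``mixed'' subcase is right: when $G\cap H_j$ is a proper nonempty subset of $H_j$ for one or both $j$, the coordinate $A=G\cap X_j$ lies strictly between the small and large parts of the $k_j$-chain you extract from $\mathcal{F}_j$, and the interleaving has to route through $G$ at exactly the small-to-large transition of the combined chain. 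This is where the bookkeeping lives; the rest is straightforward.
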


Gerbner, et al. introduce a size 30 family which is used to prove a bound for $\sat(k,k)$. With some adjustment, Morrison, Noel, and Scott extend this construction to be a saturated $6$-Sperner set for all $n$.

Repeatedly applying Lemma 15 on this set yields a saturated $k$-Sperner system on an arbitrarily large ground set $X$ such that $$\bigl|\mathcal{F}^\sm\bigr| + \bigl|\mathcal{F}^\lr\bigr| = 15^j + 15^j = 2*15^j$$ for $k = 4j+2$.

For $k$ of the form $4j+2+s$, $0\leq s\leq 3$, we then apply Lemma~\ref{lem:bootstrap} with the saturated 3-Sperner set $\{\emptyset,1,\overline{1} , \overline{\emptyset}\}$ to yield $\sat(k) \leq 2^{s+1}*15^j$.

A very interesting property of this size 30 family is that while the structure of the large and small sets bears a strong symmetric resemblance, they are not complements of each other.

\section{New Upper Bound}
\label{sec:upperbound}

Bootstrapping the above $6$-saturated Sperner example, for $k=7$, gives a $7$-saturated Sperner system of size 60. We have managed to find a $7$-saturated Sperner system of size 56. 

Let $\F =\bigcup_{i=0}^6 \A_i$ be a family composed of the following antichains (see Figure~\ref{fig:constructionlarge} and Figure~\ref{fig:constructionsmall}):
\begin{itemize}
    \item $\A_0 = \{\emptyset\}$ 
    \item $\A_1 = \{2,3,5,6,7\}\cup \{14H\}$ 
    \item $\A_2^{\sm} = \{12, 23, 34, 45, 56, 67, 17\}$ 
    \item $\A_2^{\lr} = \{357H,146H,257H,136H,247H,135H,246H\}$ 
    \item $\A_2 = \A_2^{\sm}\cup A_2^{\lr}$
    \item $\A_3^{\sm} = \{126, 237, 134, 245, 356, 467, 157\}$
    \item $\A_3^{\lr} = \{\overline{S} : S \in \A_3^\sm\}$
    \item $\A_3=\A_3^{\sm}\cup A_3^{\lr}$
    \item $\A_4 = \{\overline{S}: S \in \A_2\}$
    \item $\A_5 = \{\overline{S}: S \in \A_1\}$
    \item $\A_6 = \{\overline{\emptyset}\}$
\end{itemize}

\begin{figure}[t]

    \begin{center}
    
    \begin{tikzpicture}[scale=1,vertex/.style={draw=black, very thick, fill=white, circle, minimum width=2pt, inner sep=2pt, outer sep=1pt},edge/.style={very thick}]
        \coordinate (zero) at (0,-3.3);
        \coordinate (one) at (2.9,3.3);
        \def\hgt{1.45};
        \def\wdtone{1.2};
        \def\wdttwo{1.5};
        \def\wdtthr{2.0};
        \large
        \begin{pgfonlayer}{foreground}
            \node[vertex] (23567c) at ($(zero)+(-2*\wdttwo,0.2*\hgt)$) {};
            \node[right,xshift=1pt,yshift=1pt] at (23567c) {$\overline{23567}$};
            
            \node[vertex] (1246c) at ($(zero)+(-2.5*\wdttwo,\hgt)$) {};
            \node[left,xshift=-1pt,yshift=1pt] at (1246c) {$\overline{1246}$};
            \node[vertex] (2357c) at ($(zero)+(-2*\wdttwo,\hgt)$) {};
            \node[right,xshift=1pt,yshift=1pt] at (2357c) {$\overline{2357}$};
            \node[vertex] (1346c) at ($(zero)+(-1.0*\wdttwo,\hgt)$) {};
            \node[right,xshift=1pt,yshift=1pt] at (1346c) {$\overline{1346}$};
            \node[vertex] (2457c) at ($(zero)+(0*\wdttwo,\hgt)$) {};
            \node[right, xshift=1pt, yshift=1pt] at (2457c) {$\overline{2457}$};
            \node[vertex] (1356c) at ($(zero)+(0.8*\wdttwo,\hgt)$) {};
            \node[right, xshift=1pt, yshift=1pt] at (1356c) {$\overline{1356}$};
            \node[vertex] (2467c) at ($(zero)+(1.6*\wdttwo,\hgt)$) {};
            \node[right, xshift=1pt, yshift=1pt] at (2467c) {$\overline{2467}$};
            \node[vertex] (1357c) at ($(zero)+(2.5*\wdttwo,\hgt)$) {};
            \node[right, xshift=1pt, yshift=1pt] at (1357c) {$\overline{1357}$};

            \node[vertex] (126c) at ($(zero)+(-2.5*\wdttwo,2*\hgt)$) {};
            \node[left, xshift=-1pt, yshift=1pt] at (126c) {$\overline{126}$};
            \node[vertex] (237c) at ($(zero)+(-2*\wdttwo,2*\hgt)$) {};
            \node[right, xshift=1pt, yshift=1pt] at (237c) {$\overline{237}$};
            \node[vertex] (134c) at ($(zero)+(-1.0*\wdttwo,2*\hgt)$) {};
            \node[right, xshift=1pt, yshift=1pt] at (134c) {$\overline{134}$};
            \node[vertex] (245c) at ($(zero)+(+0*\wdttwo,2*\hgt)$) {};
            \node[right, xshift=1pt, yshift=1pt] at (245c) {$\overline{245}$};
            \node[vertex] (356c) at ($(zero)+(0.8*\wdttwo,2*\hgt)$) {};
            \node[right, xshift=1pt, yshift=1pt] at (356c) {$\overline{356}$};
            \node[vertex] (467c) at ($(zero)+(1.6*\wdttwo,2*\hgt)$) {};
            \node[right, xshift=1pt, yshift=1pt] at (467c) {$\overline{467}$};
            \node[vertex] (157c) at ($(zero)+(2.5*\wdttwo,2*\hgt)$) {};
            \node[right, xshift=1pt, yshift=1pt] at (157c) {$\overline{157}$};

            \node[vertex] (12c) at ($(zero)+(-2.5*\wdttwo,3*\hgt)$) {};
            \node[left, xshift=-1pt, yshift=1pt] at (12c) {$\overline{12}$};
            \node[vertex] (23c) at ($(zero)+(-2*\wdttwo,3*\hgt)$) {};
            \node[right, xshift=1pt, yshift=1pt] at (23c) {$\overline{23}$};
            \node[vertex] (34c) at ($(zero)+(-1*\wdttwo,3*\hgt)$) {};
            \node[right, xshift=1pt, yshift=1pt] at (34c) {$\overline{34}$};
            \node[vertex] (45c) at ($(zero)+(0*\wdttwo,3*\hgt)$) {};
            \node[right, xshift=1pt, yshift=1pt] at (45c) {$\overline{45}$};
            \node[vertex] (56c) at ($(zero)+(0.8*\wdttwo,3*\hgt)$) {};
            \node[right, xshift=1pt, yshift=1pt] at (56c) {$\overline{56}$};
            \node[vertex] (67c) at ($(zero)+(1.6*\wdttwo,3*\hgt)$) {};
            \node[right, xshift=1pt, yshift=1pt] at (67c) {$\overline{67}$};
            \node[vertex] (17c) at ($(zero)+(2.5*\wdttwo,3*\hgt)$) {};
            \node[right, xshift=1pt, yshift=1pt] at (17c) {$\overline{17}$};

            \node[vertex] (2c) at ($(zero)+(-2.5*\wdtone,4*\hgt)$) {};
            \node[left, xshift=-3pt, yshift=0pt] at (2c) {$\overline{2}$};
            \node[vertex] (3c) at ($(zero)+(-1.5*\wdtone,4*\hgt)$) {};
            \node[right, xshift=4pt, yshift=0pt] at (3c) {$\overline{3}$};
            \node[vertex] (5c) at ($(zero)+(0*\wdtone,4*\hgt)$) {};
            \node[left, xshift=-2pt, yshift=0pt] at (5c) {$\overline{5}$};
            \node[vertex] (6c) at ($(zero)+(1.5*\wdtone,4*\hgt)$) {};
            \node[right, xshift=3pt, yshift=0pt] at (6c) {$\overline{6}$};
            \node[vertex] (7c) at ($(zero)+(2.5*\wdtone,4*\hgt)$) {};
            \node[right, xshift=3pt, yshift=0pt] at (7c) {$\overline{7}$};

            \node[vertex] (0c) at ($(zero)+(0*\wdttwo,5*\hgt)$) {};
            \node[above, yshift=1pt] at (0c) {$\overline{\emptyset}$};
        \end{pgfonlayer}
        \begin{pgfonlayer}{background}
            \draw[edge] (0c) -- (3c);
            \draw[edge] (0c) -- (2c);
            \draw[edge] (0c) -- (5c);
            \draw[edge] (0c) -- (6c);
            \draw[edge] (0c) -- (7c);
            
            \draw[edge] (2c) -- (12c);
            \draw[edge] (2c) -- (23c);
            \draw[edge] (3c) -- (23c);
            \draw[edge] (3c) -- (34c);
            \draw[edge] (7c) -- (17c);
            \draw[edge] (5c) -- (56c);
            \draw[edge] (5c) -- (45c);
            \draw[edge] (6c) -- (56c);
            \draw[edge] (6c) -- (67c);
            \draw[edge] (7c) -- (67c);
            
            \draw[edge] (12c) -- (126c);
            \draw[edge] (23c) -- (237c);
            \draw[edge] (34c) -- (134c);
            \draw[edge] (45c) -- (245c);
            \draw[edge] (56c) -- (356c);
            \draw[edge] (67c) -- (467c);
            \draw[edge] (17c) -- (157c);
            
            \draw[edge] (126c) -- (1246c);
            \draw[edge] (237c) -- (2357c);
            \draw[edge] (134c) -- (1346c);
            \draw[edge] (245c) -- (2457c);
            \draw[edge] (356c) -- (1356c);
            \draw[edge] (467c) -- (2467c);
            \draw[edge] (157c) -- (1357c);
            
            \draw[edge] (2357c) -- (23567c);
        \end{pgfonlayer}
    \end{tikzpicture}
\end{center}

    \caption{Saturated 7-Sperner system, $\mathcal{F}^{\lr}$.  Note: $\overline{1246}=357H$, $\overline{2357}=146H$, $\overline{1346}=257H$, $\overline{2457}=136H$, $\overline{1356}=247H$, and $\overline{2467}=135H$, $\overline{1357}=246H$.}
    \label{fig:constructionlarge}
    \end{figure}
    
    \begin{figure}[t]
    \begin{tikzpicture}[scale=1,vertex/.style={draw=black, very thick, fill=white, circle, minimum width=2pt, inner sep=2pt, outer sep=1pt},edge/.style={very thick}]
        \coordinate (zero) at (0,-3.3);
        \coordinate (one) at (2.9,3.3);
        \def\hgt{1.45};
        \def\wdtone{1.2};
        \def\wdttwo{1.5};
        \def\wdtthr{2.0};
        \large
        \begin{pgfonlayer}{foreground}
            \node[vertex] (0) at ($(zero)+(0,0.2*\hgt)$) {};
            \node[below, yshift=-1pt] at (0) {$\emptyset$};
            \node[vertex] (2) at ($(zero)+(-2.5*\wdtone,\hgt)$) {};
            \node[left, xshift=-3pt, yshift=1pt] at (2) {$2$};
            \node[vertex] (3) at ($(zero)+(-1.5*\wdtone,\hgt)$) {};
            \node[right, xshift=4pt, yshift=1pt] at (3) {$3$};
            \node[vertex] (5) at ($(zero)+(0*\wdtone,\hgt)$) {};
            \node[left, xshift=-2pt, yshift=1pt] at (5) {$5$};
            \node[vertex] (6) at ($(zero)+(1.5*\wdtone,\hgt)$) {};
            \node[right, xshift=3pt, yshift=1pt] at (6) {$6$};
            \node[vertex] (7) at ($(zero)+(2.5*\wdtone,\hgt)$) {};
            \node[right, xshift=3pt, yshift=1pt] at (7) {$7$};

            \node[vertex] (12) at ($(zero)+(-2.5*\wdttwo,2*\hgt)$) {};
            \node[left, xshift=-1pt, yshift=0pt] at (12) {$12$};
            \node[vertex] (23) at ($(zero)+(-2*\wdttwo,2*\hgt)$) {};
            \node[right, xshift=1pt, yshift=0pt] at (23) {$23$};
            \node[vertex] (34) at ($(zero)+(-1.0*\wdttwo,2*\hgt)$) {};
            \node[right, xshift=1pt, yshift=0pt] at (34) {$34$};
            \node[vertex] (45) at ($(zero)+(+0*\wdttwo,2*\hgt)$) {};
            \node[right, xshift=1pt, yshift=0pt] at (45) {$45$};
            \node[vertex] (56) at ($(zero)+(0.8*\wdttwo,2*\hgt)$) {};
            \node[right, xshift=1pt, yshift=0pt] at (56) {$56$};
            \node[vertex] (67) at ($(zero)+(1.6*\wdttwo,2*\hgt)$) {};
            \node[right, xshift=1pt, yshift=0pt] at (67) {$67$};
            \node[vertex] (17) at ($(zero)+(2.5*\wdttwo,2*\hgt)$) {};
            \node[right, xshift=1pt, yshift=0pt] at (17) {$17$};

            \node[vertex] (126) at ($(zero)+(-2.5*\wdttwo,3*\hgt)$) {};
            \node[left, xshift=-1pt, yshift=0pt] at (126) {$126$};
            \node[vertex] (237) at ($(zero)+(-2*\wdttwo,3*\hgt)$) {};
            \node[right, xshift=1pt, yshift=0pt] at (237) {$237$};
            \node[vertex] (134) at ($(zero)+(-1*\wdttwo,3*\hgt)$) {};
            \node[right, xshift=1pt, yshift=0pt] at (134) {$134$};
            \node[vertex] (245) at ($(zero)+(0*\wdttwo,3*\hgt)$) {};
            \node[right, xshift=1pt, yshift=0pt] at (245) {$245$};
            \node[vertex] (356) at ($(zero)+(0.8*\wdttwo,3*\hgt)$) {};
            \node[right, xshift=1pt, yshift=0pt] at (356) {$356$};
            \node[vertex] (467) at ($(zero)+(1.6*\wdttwo,3*\hgt)$) {};
            \node[right, xshift=1pt, yshift=0pt] at (467) {$467$};
            \node[vertex] (157) at ($(zero)+(2.5*\wdttwo,3*\hgt)$) {};
            \node[right, xshift=1pt, yshift=0pt] at (157) {$157$};

            \node[vertex] (1246) at ($(zero)+(-2.5*\wdttwo,4*\hgt)$) {};
            \node[left, xshift=-1pt, yshift=0pt] at (1246) {$1246$};
            \node[vertex] (2357) at ($(zero)+(-2*\wdttwo,4*\hgt)$) {};
            \node[right, xshift=1pt, yshift=0pt] at (2357) {$2357$};
            \node[vertex] (1346) at ($(zero)+(-1*\wdttwo,4*\hgt)$) {};
            \node[right, xshift=1pt, yshift=0pt] at (1346) {$1346$};
            \node[vertex] (2457) at ($(zero)+(0*\wdttwo,4*\hgt)$) {};
            \node[right, xshift=1pt, yshift=0pt] at (2457) {$2457$};
            \node[vertex] (1356) at ($(zero)+(0.8*\wdttwo,4*\hgt)$) {};
            \node[right, xshift=1pt, yshift=0pt] at (1356) {$1356$};
            \node[vertex] (2467) at ($(zero)+(1.6*\wdttwo,4*\hgt)$) {};
            \node[right, xshift=1pt, yshift=0pt] at (2467) {$2467$};
            \node[vertex] (1357) at ($(zero)+(2.5*\wdttwo,4*\hgt)$) {};
            \node[right, xshift=1pt, yshift=0pt] at (1357) {$1357$};

            \node[vertex] (23567) at ($(zero)+(-2*\wdttwo,5*\hgt)$) {};
            \node[right, xshift=1pt, yshift=0pt] at (23567) {$23567$};
        \end{pgfonlayer}
        \begin{pgfonlayer}{background}
            \draw[edge] (0) -- (3);
            \draw[edge] (0) -- (2);
            \draw[edge] (0) -- (5);
            \draw[edge] (0) -- (6);
            \draw[edge] (0) -- (7);
            \draw[edge] (2) -- (12);
            \draw[edge] (2) -- (23);
            \draw[edge] (3) -- (23);
            \draw[edge] (3) -- (34);
            \draw[edge] (7) -- (17);
            \draw[edge] (5) -- (56);
            \draw[edge] (5) -- (45);
            \draw[edge] (6) -- (56);
            \draw[edge] (6) -- (67);
            \draw[edge] (7) -- (67);
            
            \draw[edge] (12) -- (126);
            \draw[edge] (23) -- (237);
            \draw[edge] (34) -- (134);
            \draw[edge] (45) -- (245);
            \draw[edge] (56) -- (356);
            \draw[edge] (67) -- (467);
            \draw[edge] (17) -- (157);
            
            \draw[edge] (126) -- (1246);
            \draw[edge] (237) -- (2357);
            \draw[edge] (134) -- (1346);
            \draw[edge] (245) -- (2457);
            \draw[edge] (356) -- (1356);
            \draw[edge] (467) -- (2467);
            \draw[edge] (157) -- (1357);
            
            \draw[edge] (2357) -- (23567);
        \end{pgfonlayer}
    \end{tikzpicture}
    \caption{Saturated 7-Sperner system, $\mathcal{F}^{\sm}$.}
    \label{fig:constructionsmall}
\end{figure}

In order to verify that this construction is a $7$-saturated Sperner system, we first need to verify the layering condition for which we refer the reader to Figure~\ref{fig:constructionlarge} and Figure~\ref{fig:constructionsmall}.

Finally, we must verify that each $\A_i$ is a saturated antichain. For $\A_0$ and $\A_6$ this is trivial. For $\A_1$ it is easy to see that every set is either a superset of a singleton in $\A_1^{\sm}=\{2,3,5,6,7\}$ or a subset of the member of $\A_1^{\lr}=\{14H\}$. By symmetry, the saturation of $\A_5$ follows.

For $\A_2$, any set of size at least $3$ must either contain some pair in $\A_2^{\sm}$ (observe that the pairs in $\A_2^{\sm}$ are consecutive atoms modulo 7) or is a set of size exactly $3$ with no pair of consecutive atoms modulo 7, which is what forms $\A_2^{\lr}$. Each pair or singleton is a subset of a member of $\A_2^{\lr}$ except those pairs of consecutive atoms, which is exactly $\A_2^{\sm}$. Thus $\A_2$ is a saturated antichain. By symmetry, $\A_4$ is a saturated antichain also.

Finally, the middle antichain $\A_3$ is the union of the Fano plane and the family consisting of the complements of those sets. Because the chromatic number of the Fano plane is 3, every subset of the seven vertices either contains a hyperedge of the Fano plane or is contained in the complement of a hyperedge. Thus, $\A_3$ is a saturated antichain. 

It is worth mentioning that there exist smaller saturated antichains with small and large elements of the same cardinality as those in $\A_2$; however, these cannot be layered underneath the Fano plane as $\A_2$ is. It should also be noted that antichains that are layered underneath the Fano plane of size 14 are plentiful, and may have a different distribution of elements between $\A_2^\sm$ and $\A_2^\lr$.

\begin{figure}[h]
\begin{center}
    \begin{tikzpicture}[scale=1,vertex/.style={draw=black, very thick, fill=white, circle, minimum width=2pt, inner sep=2pt, outer sep=1pt},edge/.style={very thick}]
         \coordinate (zero) at (1,2);
        \coordinate (one) at (7,2);
        \def\xlen{3}
        \def\wdt{\xlen/2};
        \def\hgt{\wdt/1.732};
        \def\wdttwo{\wdt/2}
        \def\hgttwo{\wdt*0.866}
        \def\rad{\hgt*2}
        
        \def\sc{0.5}
        \begin{pgfonlayer}{foreground}
            \node[vertex] (6) at ($(zero)$) {};
            \node[right,xshift=-2pt,yshift = -9] at (6) {$6$};
            \node[vertex] (2) at ($(zero)+(0,\rad)$) {};
            \node[right,xshift=1pt] at (2) {$2$};
            \node[vertex] (4) at ($(zero)+(-\wdt,-\hgt)$) {};
            \node[above,xshift=-5pt] at (4) {$4$};
            \node[vertex] (3) at ($(zero)+(\wdt,-\hgt)$)  {};
            \node[above,xshift=5pt] at (3) {$3$};
            \node[vertex] (5) at ($(zero)+(-\wdt+\wdttwo,\hgttwo-\hgt)$)  {};
            \node[left,yshift=-3pt,xshift=-2pt] at (5) {$5$};
            \node[vertex] (7) at ($(zero)+(\wdt-\wdttwo,\hgttwo-\hgt)$) {};
            \node[right,yshift=-3pt,xshift=2pt] at (7) {$7$};
            \node[vertex] (1) at ($(zero)+(0,-\hgt)$) {};
            \node[below,xshift=-4pt] at (1) {$1$};
            
            \node[vertex] (6a) at ($(one)$) {};
            \node[above,yshift=1pt,yshift =2pt] at (6a) {$6$};
            \node[vertex] (2a) at ($(one)+(0,\rad)$) {};
            \node[above,yshift=1pt] at (2a) {$2$};
            \node[vertex] (4a) at ($(one)+(-\wdt,-\hgt)$) {};
            \node[below,xshift=-5pt] at (4a) {$4$};
            \node[vertex] (3a) at ($(one)+(\wdt,-\hgt)$)  {};
            \node[below,xshift=5pt] at (3a) {$3$};
            \node[vertex] (5a) at ($(one)+(-\wdt+\wdttwo,\hgttwo-\hgt)$)  {};
            \node[left,xshift=-2pt] at (5a) {$5$};
            \node[vertex] (7a) at ($(one)+(\wdt-\wdttwo,\hgttwo-\hgt)$) {};
            \node[right,xshift=2pt] at (7a) {$7$};
            \node[vertex] (1a) at ($(one)+(0,-\hgt)$) {};
            \node[below,yshift=-1pt] at (1a) {$1$};;
        \end{pgfonlayer}
        \begin{pgfonlayer}{background}
            \draw[edge] ($(1)+(0,-0.5)$) to ($(2)+(0,0.5)$);
            \draw[edge] ($(3)+(\sc,0)$) to ($(4)+(-\sc,0)$);
            \draw[edge] (1) to [out = 0, in = 260] ($(7)+(0.08,0.4)$);
            \draw[edge] (1) to [out = 180, in = 280] ($(5)+(-.08,.4)$);
            \draw[edge] ($(3)+(\sc,-\sc+0.25)$) to ($(5)+(-\sc,\sc-0.25)$);
            \draw[edge] ($(3)+(\sc,-\sc-0.25)$) to ($(2)+(-\sc,\sc+0.25)$);
            \draw[edge] ($(2)+(\sc,\sc+0.25)$) to ($(4)+(-\sc,-\sc-0.25)$);
            \draw[edge] ($(4)+(-\sc,-\sc+0.25)$) to ($(7)+(\sc,\sc-0.25)$);
            
            \draw[edge] (1a) to [bend left] (2a);
            \draw[edge] (1a) -- (7a);
            \draw[edge] (6a) -- (7a);
            \draw[edge] (5a) -- (6a);
            \draw[edge] (4a) -- (5a);
            \draw[edge] (3a) to [bend right=40] (2a);
            \draw[edge] (3a) to [bend left=45] (4a);
        \end{pgfonlayer}
    \end{tikzpicture}
\end{center}
\caption{The Fano plane ($\A_3^\sm$) and $\A_2^\sm$}
\end{figure}

As this is a layered sequence of pairwise disjoint saturated antichains, Lemma~\ref{lem:antichainsareSperner} gives that $\mathcal{F}$ is a saturated $7$-Sperner system of size 56. Using Lemma~\ref{lem:bootstrap}, we can construct a saturated $k$-Sperner system of cardinality $2*28^j*2^s$ for $k = 5j + 2 +s$, $0\leq s\leq 4$ on a sufficiently large ground set.

Theorem~\ref{thm:UB} is obtained by observing $\sat(k)\leq 2^{s+1}\cdot 28^j$ and solving $2^{s+1}\cdot 28^j\leq 2^{(1-\varepsilon)k}$ for $\varepsilon$. A value of $\varepsilon = \left(1-\frac{\log_2(28)}{5} \right) \approx 0.038529$ is sufficient to make this true for all $k\geq 7$.

\section{New lower bound}

As for the lower bound, we will assume, via Proposition~\ref{prop:sizes}, that $\F$ consists of saturated antichains $\A_i$, $i=0,\ldots,k-1$ where $\A_0=\{\emptyset\}$, $\A_{k-1}=\{[n]\}$ and for all $i\in\{1,\ldots,k-2\}$, $\A_i$ has the property that every $S\in\A_i^{\sm}$ has cardinality at least $i$ and every $L\in\A_i^{\lr}$ has cardinality at most $n-(k-i-1)$. 

First, Lemma~\ref{lem:antichainLB} establishes a lower bound for each of the $\A_i$s. Recall by Lemma~\ref{lem:A1} that $\bigl|\A_0\bigr|=\bigl|\A_{k-1}\bigr|=1$, $\bigl|\A_1^{\sm}\bigr|,\bigl|\A_{k-2}^{\lr}\bigr|\geq k-2$, and  $\bigl|\A_1^{\lr}\bigr|=\bigl|\A_{k-2}^{\sm}\bigr|=1$.

\begin{lem}
    Let $k \geq 7$ and $i$ be integers, where $2 \leq i \leq \lfloor(k-1)/2\rfloor$, and let $\A_i^\sm \cup \A_i^\lr$ be a saturated antichain such that $|S| \geq i$ for all $S \in \A_i^\sm$ and $|L| \leq n- (k-i-1)$ for all $L \in \A_i^\lr$.

    Then $|\A_i^\sm| + |\A_i^\lr| > \exp\left\{2\ln 2\frac{i(k-i-1)}{k-1}\right\}$.
    \label{lem:antichainLB}
\end{lem}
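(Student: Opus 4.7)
The plan is a probabilistic argument exploiting saturation, generalizing the counting bound of Gerbner et al.~\cite{GERB} by using a biased random subset in place of a uniform one. By Lemma~\ref{lem:satant} applied to the saturated antichain $\A_i=\A_i^\sm\cup\A_i^\lr$ (with its homogeneous atom), every $T\in\mathcal{P}([n])$ either contains some $S\in\A_i^\sm$ or is contained in some $L\in\A_i^\lr$; the members of $\A_i$ themselves trivially satisfy one of these conditions.

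Sample $T\subseteq[n]$ by including each element of $[n]$ independently with probability $p\in(0,1)$, to be chosen later. For any $S\in\A_i^\sm$, $\Pr[T\supseteq S]=p^{|S|}\le p^i$ since $|S|\ge i$; for any $L\in\A_i^\lr$, $\Pr[T\subseteq L]=(1-p)^{n-|L|}\le (1-p)^{k-i-1}$ since $|L|\le n-(k-i-1)$. Applying the union bound to the covering above yields
$$ 1 \;\le\; \bigl|\A_i^\sm\bigr|\,p^i \;+\; \bigl|\A_i^\lr\bigr|\,(1-p)^{k-i-1}. $$
Taking $p:=1-2^{-2i/(k-1)}$ makes $(1-p)^{k-i-1}=2^{-2i(k-i-1)/(k-1)}$, and the remaining step is to verify $p^i\le 2^{-2i(k-i-1)/(k-1)}$. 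After taking $i$th roots, this reduces to $2^{-2i/(k-1)}+2^{-2(k-i-1)/(k-1)}\ge 1$, which follows from AM--GM: since the two exponents sum to $-2$, one has $2^{-\alpha}+2^{-\beta}\ge 2\sqrt{2^{-(\alpha+\beta)}}=1$, with equality only when $\alpha=\beta$.

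Substituting back yields $\bigl(\bigl|\A_i^\sm\bigr|+\bigl|\A_i^\lr\bigr|\bigr)\cdot 2^{-2i(k-i-1)/(k-1)}\ge 1$, which is the desired bound in its weak form. For the strict form I would observe that the events $\{T\supseteq S\}_{S\in\A_i^\sm}$ overlap pairwise (through sets containing $S\cup S'$), as do the events $\{T\subseteq L\}_{L\in\A_i^\lr}$ (through sets contained in $L\cap L'$), so the union bound is strictly loose whenever $\max\bigl(\bigl|\A_i^\sm\bigr|,\bigl|\A_i^\lr\bigr|\bigr)\ge 2$; the only remaining corner case $\bigl|\A_i^\sm\bigr|+\bigl|\A_i^\lr\bigr|\le 2$ is ruled out because our choice of $p$ already gives $\bigl|\A_i^\sm\bigr|\,p^i+\bigl|\A_i^\lr\bigr|\,(1-p)^{k-i-1}\le 2\cdot 2^{-2i(k-i-1)/(k-1)}<1$ whenever $2i(k-i-1)/(k-1)>1$, which holds for all $i\ge 2$ and $k\ge 7$. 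The main obstacle is identifying the right choice of $p$ and executing the AM--GM step; once that is in hand, the rest is bookkeeping.
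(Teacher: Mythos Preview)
Your proof is correct and follows essentially the same approach as the paper's: pick a random subset with inclusion probability $p$, use saturation to obtain $1\le \bigl|\A_i^\sm\bigr|\,p^{\,i}+\bigl|\A_i^\lr\bigr|\,(1-p)^{k-i-1}$, and then optimize $p$. The only difference is in the optimization step: the paper sets $p=\tfrac12-\varepsilon$ with $\varepsilon=\tfrac{\ln 2}{2}\bigl(1-\tfrac{2i}{k-1}\bigr)$ and uses $\ln(1\pm 2\varepsilon)<\pm 2\varepsilon$ to equalize the two exponents, whereas you take $p=1-2^{-2i/(k-1)}$ so that one term hits the target exactly and then control the other via the AM--GM identity $2^{-\alpha}+2^{-\beta}\ge 2\cdot 2^{-(\alpha+\beta)/2}=1$; your explicit overlap argument for strictness is a clean addition.
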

\begin{proof}
    Choose a set $R$ at random by choosing each member of $\{1,\ldots,n\}$ independently with probability $1/2-\varepsilon$, where $\varepsilon = \frac{\ln2}{2}\left(1-\frac{2i}{k-1}\right)$.

    Let $X$ be the number of $S \in \A_i^\sm$ such that $S \subseteq R$ plus the number of $L \in \A_i^\lr$ such that $R \subseteq L$.

    The expected value of $X$ will be
    \begin{align*}
        \mathbb{E}[X] &= \sum_{S \in \A_i^\sm} \left(\frac{1}{2}-\varepsilon\right)^{|S|} + \sum_{L \in \A_i^\lr} \left(\frac{1}{2}+\varepsilon\right)^{n-|L|}\\
        \mathbb{E}[X] &\leq \bigl|\A_i^\sm\bigr|\left(\frac{1}{2}-\varepsilon\right)^i + \bigl|\A_i^\lr\bigr|\left(\frac{1}{2}+\varepsilon\right)^{k-i-1}\\
        &< \bigl|\A_i^\sm\bigr|\exp\left\{-i\ln 2 - 2i\varepsilon\right\} + \bigl|\A_i^\lr\bigr|\exp\left\{-(k-i-1)\ln2 + 2(k-i-1)\varepsilon\right\}\\
        &\leq \left(\bigl|\A_i^\sm\bigr| + \bigl|\A_i^\lr\bigr|\right)\exp\left\{-2\ln2 \frac{i(k-i-1)}{k-1}\right\}.\\
    \end{align*}
    Since the antichain is saturated, $\mathbb{E}[X] \geq 1$ and the result follows.
\end{proof}

The more specific version of Theorem~\ref{thm:LB} is as follows:
\begin{thm}
    Let $k \geq 7$. If $\F$ is a saturated $k$-Sperner system, then $|\F| \geq 2^{k/2 + (1/2)\log_2k -1.66}$ and $|\F| \geq 2^{k/2 + (1/2)\log_2k}$ if $k\geq 497$. 
\end{thm}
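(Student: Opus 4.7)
The plan is to use the canonical antichain decomposition of $\F$ furnished by Lemma~\ref{lem:decomp}, and lower-bound each layer separately. Write $\F = \A_0\sqcup\cdots\sqcup\A_{k-1}$ with $\A_0=\{\emptyset\}$ and $\A_{k-1}=\{[n]\}$. From Lemma~\ref{lem:A1} we have $|\A_1|,|\A_{k-2}|\geq k-1$, and Lemma~\ref{lem:antichainLB} gives $|\A_i|>2^{f(i)}$ for $2\leq i\leq \lfloor(k-1)/2\rfloor$, where $f(i):=2i(k-i-1)/(k-1)$. Complementing ($S\mapsto[n]\setminus S$) produces another saturated $k$-Sperner system with the same homogeneous atom but with ``small'' and ``large'' interchanged; applying the lemma there yields the same bound for $\lceil(k+1)/2\rceil\leq i\leq k-3$, using $f(i)=f(k-1-i)$. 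Combining,
\begin{equation*}
|\F| \;\geq\; 2k \;+\; \sum_{i=2}^{k-3} 2^{f(i)}.
\end{equation*}

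Next, substitute $j=i-(k-1)/2$. A short computation gives $f(i)=(k-1)/2-2j^2/(k-1)$, so the sum equals $2^{(k-1)/2}\sum_j 2^{-2j^2/(k-1)}$, a truncated discrete Gaussian of variance $(k-1)/(4\ln 2)$. I would bound it from below using Poisson summation: with $\alpha:=(2\ln 2)/(k-1)$ and $c$ either integer or half-integer,
\begin{equation*}
\sum_{j\in\mathbb{Z}} e^{-\alpha(j-c)^2} \;=\; \sqrt{\pi/\alpha}\,\bigl(1 \pm 2e^{-\pi^2/\alpha} + O(e^{-4\pi^2/\alpha})\bigr),
\end{equation*}
while the tail outside $|j|\leq(k-5)/2$ is of order $e^{-\alpha(k-5)^2/4}$, i.e.\ super-exponentially small. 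The net effect is
\begin{equation*}
|\F| \;>\; (1-o(1))\,\sqrt{\tfrac{\pi}{4\ln 2}}\,\sqrt{k-1}\cdot 2^{k/2},
\end{equation*}
with controlling constant $\sqrt{\pi/(4\ln 2)}\approx 1.0645$.

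To pin down the explicit thresholds I would split into two regimes. For the asymptotic bound $|\F|\geq\sqrt{k}\,2^{k/2}$, one needs $1.0645\sqrt{(k-1)/k}$, after subtracting explicit Poisson and tail corrections, to exceed $1$; careful bookkeeping of these sub-leading terms should give the threshold $k\geq 497$. For the weaker bound valid for all $k\geq 7$, the slack factor $2^{-1.66}\approx 0.316$ easily absorbs both the Gaussian-discretization loss and the $2k$ additive term, so the inequality follows from the same estimate; in the finite residue $7\leq k<497$ one may verify directly by computing $2k+\sum_{i=2}^{k-3}2^{f(i)}$ numerically.

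The main obstacle I anticipate is this explicit bookkeeping. Because the governing constant $\sqrt{\pi/(4\ln 2)}$ only mildly exceeds $1$ after dividing by the $\sqrt{2}$ that converts $2^{(k-1)/2}$ to $2^{k/2}$, pinning down the exact crossover $k=497$ requires tight, explicit control of both the Poisson defect and the truncation tail rather than treating them as $o(1)$; there is little room for slack in the middle-sized $k$ regime.
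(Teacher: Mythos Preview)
Your proposal is correct and follows the paper's strategy almost exactly: decompose $\F$ into its canonical antichains, apply Lemma~\ref{lem:antichainLB} to each interior layer, exploit the $i\leftrightarrow k-1-i$ symmetry, and then estimate the resulting Gaussian-type sum $\sum_i 2^{f(i)}$ with $f(i)=2i(k-1-i)/(k-1)$. The only real difference is how the sum is evaluated. The paper bounds the sum below by an integral (using monotonicity of $f$ on $(2,(k-1)/2)$), performs the substitution $x=(k-1)/2 - y\sqrt{(k-1)/(2\ln 2)}$, and expresses the result via the error function; a computer-algebra check then pins down the constant $2^{-1.66}$ for $k\ge 7$ and the crossover at $k=497$. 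You instead treat the sum as a truncated theta series and invoke Poisson summation to extract the main term $\sqrt{\pi/(4\ln 2)}\,\sqrt{k}\,2^{k/2}$ directly. Both routes yield the same leading constant $\sqrt{\pi/(4\ln 2)}\approx 1.0645$; the paper's integral-to-$\erf$ argument is more elementary and gives immediately computable finite-$k$ bounds, while your Poisson approach is slicker asymptotically but, as you correctly flag, requires more explicit bookkeeping of the correction terms to nail down the exact threshold $497$. In practice both proofs end by appealing to numerical evaluation for the sharp constants.
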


\begin{proof}
    We know that $\bigl|\A_0\bigr| = \bigl|\A_{k-1}\bigr| = 1$, $\bigl|\A_1\bigr|, \bigl|\A_{k-2}\bigl|\geq k-1$. The remaining layers have sizes bounded by Lemma~\ref{lem:antichainLB}, and the bounds for $|\A_i|$ will be the same as for $|\A_{k-1-i}|$. Using the notation ${\bf 1}_{k\;{\rm odd}}$ to mean 1 if $k$ is odd and 0 if $k$ is even, then
    \begin{align*}
        |\F| &\geq 2 + 2(k-1) + 2 \sum_{i=2}^{\lceil(k-1)/2\rceil -1}\exp\left\{2\ln2 \frac{i(k-i-1)}{k-1}\right\} + \exp\left\{\ln2 \frac{k-1}{2}\right\}{\bf 1}_{k\;{\rm odd}} .
    \end{align*}

    We will lower-bound by zero the terms outside of the main summation, as this does not effect the asymptotics. Since the function $\exp\left\{2\ln 2\frac{x(k-x-1)}{k-1}\right\}$ is increasing for all $x\in \left(2,(k-1)/2\right)$, it may be estimated by an integral. As such,
    \begin{align*}
        |\F| &\geq 2\int_{x=1}^{(k-3)/2} \exp\left\{2\ln 2\; \frac{x(k-1-x)}{k-1}\right\}\, dx .
    \end{align*}

    A substitution allows us to put this in terms of the error function, that is, $\erf(x)={\displaystyle\int_0^x} \frac{2}{\sqrt{\pi}} e^{-y^2}\, dy$. The substitution is $x=\frac{k-1}{2}-y\sqrt{\frac{k-1}{2\ln 2}}$

    \begin{align*}
        |\F| &\geq 2\int_{y=(k-3)\sqrt{\ln 2/(2(k-1))}}^{\sqrt{2\ln 2/(k-1)}} \exp\left\{\frac{2\ln 2}{k-1}\; \left(\left(\frac{k-1}{2}\right)^2-y^2\frac{k-1}{2\ln 2}\right)\right\} 
        \left(-\sqrt{\frac{k-1}{2\ln 2}}\right)\, dy \\
        &\geq 2\exp\left\{\frac{(k-1)\ln 2}{2}\right\} \sqrt{\frac{k-1}{2\ln 2}}\int_{y=\sqrt{2\ln 2/(k-1)}}^{(k-3)\sqrt{\ln 2/(2(k-1))}} e^{-y^2}\, dy \\
        &\geq 2^{k/2} \sqrt{k} \sqrt{\frac{\pi (k-1)}{4k\ln 2}} \int_{y=\sqrt{2\ln 2/(k-1)}}^{(k-3)\sqrt{\ln 2/(2(k-1))}} \frac{2}{\sqrt{\pi}} e^{-y^2} dy \\
        &\geq 2^{k/2} \sqrt{k} \sqrt{\frac{\pi (k-1)}{4k\ln 2}} \left[{\textstyle \left(\erf\left\{\sqrt{\frac{(k-3)^2\ln 2}{2(k-1)}}\right\} - \erf\left\{\sqrt{\frac{2\ln 2}{k-1}}\right\} \right)}\right] \\
        &\geq 2^{k/2+(1/2)\log_2 k-1.66}\\
    \end{align*}
    Moreover, if $k \geq 497$, then a computer algebra system gives $|\F| \geq 2^{k/2 + (1/2)\log_2k}$.
\end{proof}

\section{Further Questions}
There are a number of open questions related to this problem. 
\begin{itemize}
    \item Is $\sat(7) = 56$, or is there a smaller construction?
    \item Can the bounds of $\sqrt{k}2^{k/2}<\sat(k)<2^{0.961471 k}$ be improved for large $k$?
    \item Are all of the optimal Sperner systems \emph{flat}, meaning that in each saturated antichain, the small elements have the same size as each other, and the large elements have the same size as each other?
\end{itemize}

\section{Acknowledgements}
We are indebted to Deepak Bal, Jonathan Cutler, and Andrew Michel, who found an error in a previous version of this manuscript which falsely claimed a lower value for $\sat(7)$. 

Martin's research was partially supported by a grant from the Simons Foundation \#709641 and this research was partially done while Martin was on an MTA Distinguished Guest Scientist Fellowship 2023 at the HUN-REN Alfr\'ed R\'enyi Institute of Mathematics. Veldt's research was supported by a grant from the National Science Foundation DMS-1839918 (RTG). Both authors are indebted to the Alfr\'ed R\'enyi Institute of Mathematics for hosting Veldt for a collaboration visit. They would also like to thank Bal\'azs Patk\'os for helpful comments on the manuscript.

\bibliographystyle{alpha}
\bibliography{bibli}

\end{document}